\documentclass[11pt]{amsart}

\usepackage[initials]{amsrefs}
\usepackage{amssymb,amsmath,amsfonts,latexsym,amsthm,hyperref,graphicx,paralist,color}
\usepackage[all]{xy}


\newtheorem{theorem}{Theorem}[section]
\newtheorem{proposition}[theorem]{Proposition}
\newtheorem{corollary}[theorem]{Corollary}
\newtheorem{example}[theorem]{Example}

\newtheorem{remark}[theorem]{Remark}

\newtheorem{lemma}[theorem]{Lemma}

\newtheorem{definition}[theorem]{Definition}

\numberwithin{equation}{section}

\begin{document}

\title{Peano curves on topological vector spaces}

\date{}

\author[Albuquerque]{N. Albuquerque}
\address{Departamento de Matem\'atica, \newline \indent
Universidade Federal da Para\'iba, \newline \indent
58.051-900 - Jo\~{a}o Pessoa, Brazil.}
\email{ngalbqrq@gmail.com}

\author[Bernal]{L. Bernal-Gonz\'alez}
\address{Departamento de An\'alisis Matem\'atico,\newline\indent
Facultad de Matem\'aticas, \newline\indent
Universidad de Sevilla, \newline\indent
Apdo. 1160, Avenida Reina Mercedes, \newline\indent
Sevilla, 41080, Spain.}
\email{lbernal@us.es}

\author[Pellegrino]{D. Pellegrino}
\address{Departamento de Matem\'atica, \newline \indent
Universidade Federal da Para\'iba, \newline \indent
58.051-900 - Jo\~ao Pessoa, Brazil.}
\email{dmpellegrino@gmail.com and pellegrino@pq.cnpq.br}

\author[Seoane]{J.B. Seoane-Sep\'ulveda}
\address{Departamento de An\'alisis Matem\'atico, \newline\indent
Facultad de Ciencias Matem\'{a}ticas, \newline\indent
Plaza de Ciencias 3, \newline\indent
Universidad Complutense de Madrid,\newline\indent
Madrid, 28040, Spain.}
\email{jseoane@mat.ucm.es}

\thanks{N. Albuquerque was supported by CAPES. L. Bernal-Gonz\'alez was partially supported by the Plan Andaluz de Investigaci\'on de la Junta de Andaluc\'{\i}a FQM-127 Grant P08-FQM-03543 and by MEC Grant MTM2012-34847-C02-01. D. Pellegrino and J.B. Seoane-Sep\'{u}lveda were partially supported by CNPq Grant 401735/2013-3 (PVE - Linha 2).}

\keywords{Peano curve, Peano space, space-filling curve, order of entire functions, lineability, spaceability}
\subjclass[2010]{15A03,30D15,54F25}

\begin{abstract}
The starting point of this paper is the existence of Peano curves, that is, continuous surjections mapping the unit interval onto the unit square. From this fact one can easily construct of a continuous surjection from the real line $\mathbb{R}$ to any Euclidean space $\mathbb{R}^n$. The algebraic structure of the set of these functions (as well as extensions to spaces with higher dimensions) is analyzed from the modern point of view of lineability, and large algebras are found within the families studied. We also investigate topological vector spaces that are continuous image of the real line, providing an optimal lineability result.
\end{abstract}

\maketitle

\tableofcontents


\section{Preliminaries}

Lately, many authors have been interested in the study of the set of surjections in $\mathbb{K}^\mathbb{K}$ ($\mathbb{K} = \mathbb{R}$ or $\mathbb{C}$). From this study, many different classes of functions have been either recovered from the old literature or introduced. Some of these classes are, for instance:
\begin{inparaenum}
\item[{\em (i)}] everywhere surjective functions (ES, see \cite{AronGS}).
\item[{\em (ii)}] strongly everywhere surjective functions (SES, see \cite{PAMS2010}),
\item[{\em (iii)}] perfectly everywhere surjective functions (PES, see \cite{PAMS2010}), and
\item[{\em (iv)}] Jones functions (J, see \cite{Gamez,GamezMS,jones1942}).
\end{inparaenum}

\vskip .1cm

If $S$ and $CS$ stand, respectively, for the set of surjections and continuous surjections on $\mathbb{R}$, the above functions (when defined on $\mathbb{R}$) enjoy the following strict inclusions:
 \[
\xymatrix{
     J \ar[r] \ar[drrr]& PES \ar[r]  \ar[drr] & SES \ar[r]  \ar[dr] & ES \ar[d]\\
     CS \ar[rrr]& & & S
    }
\]
Authors have studied the previous classes of functions in depth, to the point of even finding large algebraic structures (infinite dimensional linear spaces or infinitely generated algebras) inside the previous sets of functions. However, ``most'' of these functions, although surjective, also are nowhere continuous on their domains. Thus, the natural question rises when one tries to consider continuous surjections.

\vskip .1cm

Inspired by Cantor's counterintuitive result stating that the unit interval $[0,1]$ has the same cardinality as the infinite number of points in any finite-dimensional manifold (such as the unit square), Peano constructed the (no doubt!) most famous space filling curve, also known as the {\em Peano curve} \cite{peano,sagan} (see Figure \ref{peano_curve}). Later on, the Hahn-Mazurkiewicz theorem
(see, e.g., \cite[Theorem 31.5]{willard} or \cite{HoY}) helped in characterizing the spaces that are the continuous image of curves, namely:

\begin{figure}
\centering
\includegraphics[width=0.5\textwidth]{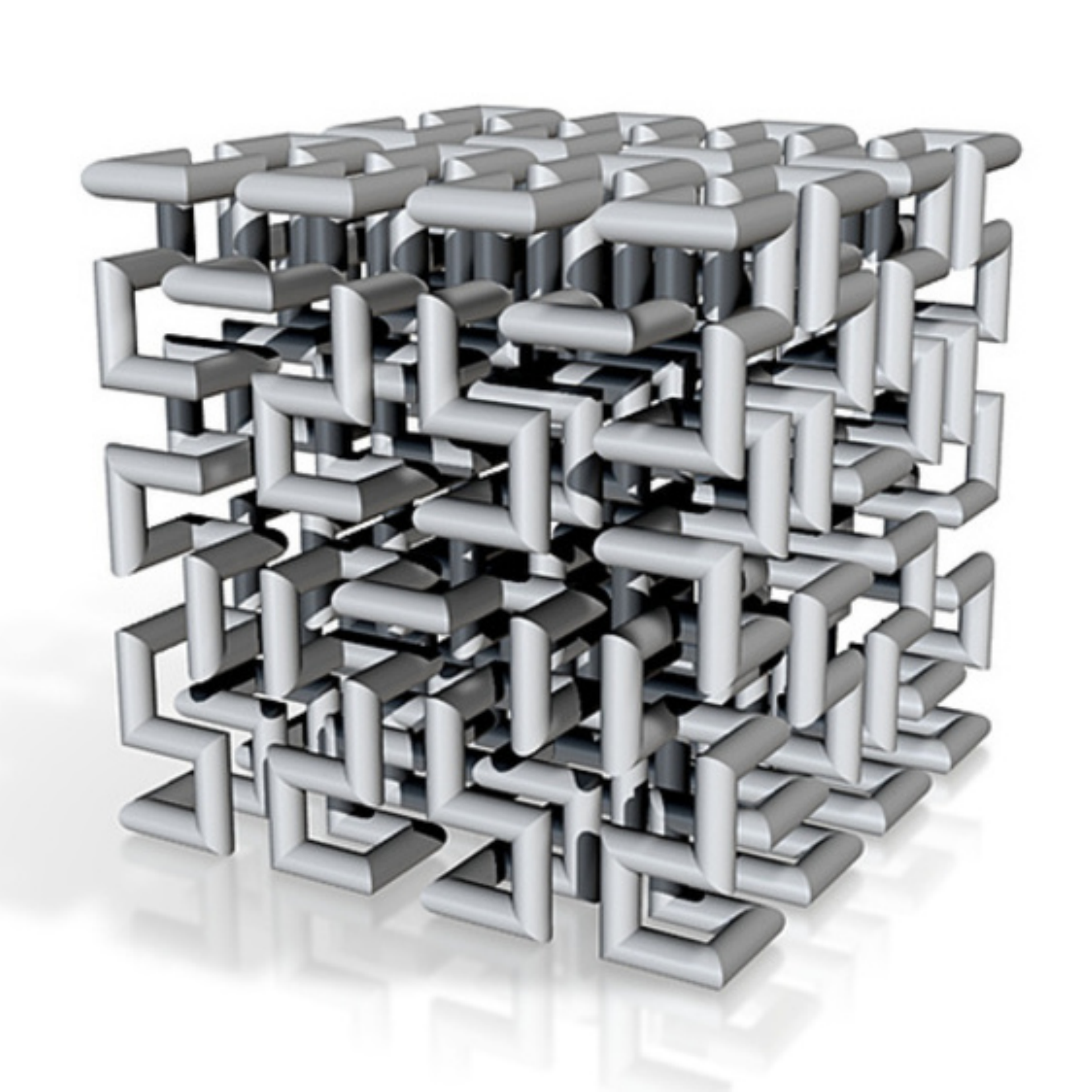}
\caption{Sketch of an iteration of a space-filling curve.}\label{peano_curve}
\end{figure}

\begin{theorem}[Hahn-Mazurkiewicz]
A non-empty Hausdorff topological space is a continuous image of the unit interval if and only if it is a compact, connected, locally connected, and second-countable space.
\end{theorem}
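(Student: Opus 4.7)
The plan is to prove the two implications separately, following the classical route.

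For the necessity, suppose $X = f([0,1])$ with $f$ continuous and $X$ non-empty. Compactness and connectedness of $X$ are immediate from the fact that continuous maps preserve these properties. For second-countability, I would observe that $f$ factors through the quotient $Y := [0,1]/\!\sim$ (where $x \sim y$ iff $f(x) = f(y)$), which is a compact Hausdorff continuous image of $[0,1]$; being compact Hausdorff and a quotient of a second-countable space in a suitable way, $Y$ is metrizable by Urysohn (alternatively, one directly checks that $X$ is a compact Hausdorff continuous image of a compact metric space, hence metrizable), so $X \cong Y$ inherits a countable base. Local connectedness is the subtlest point in this direction: given $p \in X$ and an open neighborhood $U$, uniform continuity of $f$ yields $\delta > 0$ such that any closed interval of length at most $\delta$ contained in $f^{-1}(U)$ maps into $U$; piecing together the $f$-images of the finitely many components of $f^{-1}(U)$ meeting $f^{-1}(p)$ produces a connected neighborhood of $p$ inside $U$.

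The sufficiency is the deep direction. Given $X$ compact, connected, locally connected, second-countable and Hausdorff, I would first apply Urysohn's metrization theorem to realise $X$ as a compact metric Peano continuum. The strategy is then a nested \emph{chain} decomposition: for each $n \geq 1$, cover $X$ by finitely many closed connected subcontinua $\{C^{(n)}_{i}\}_{i=1}^{k_n}$ of diameter at most $2^{-n}$, ordered so that consecutive links overlap, and such that the $(n{+}1)$-st chain refines the $n$-th. Linearly parametrise the $n$-th chain by intervals $\{I^{(n)}_i\}$ partitioning $[0,1]$, and define $f(t)$ to be the unique point of $\bigcap_n C^{(n)}_{i(n,t)}$, where $i(n,t)$ is the index of the link containing $t$. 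Diameter control forces the intersection to be a singleton, continuity follows from the refinement property, and surjectivity follows from the exhaustiveness of the chains.

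The main obstacle lies in producing the chain decompositions, which is precisely where all four hypotheses must cooperate: local connectedness supplies a basis of small open sets with connected closures, compactness distills a finite subcover, connectedness guarantees that the intersection graph of these subcontinua is connected, and second-countability (via metrizability) makes ``small diameter'' meaningful. A combinatorial argument on the intersection graph, essentially due to Mazurkiewicz, then arranges the finite subcover into a consecutively overlapping chain. Once this lemma is granted, the parametrisation step is routine. Rather than reproduce every technical lemma, I would refer to the presentations in Willard or Hocking--Young for the full combinatorial-topological details, emphasising how each of the four hypotheses enters the construction.
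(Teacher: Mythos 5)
The paper does not actually prove this statement: the Hahn--Mazurkiewicz theorem is quoted as a classical result with the proof delegated to Willard and to Hocking--Young, so there is no in-paper argument to compare yours against. Your outline is the standard classical route and is essentially sound as a sketch, with one local repair needed and one large debt acknowledged. In the necessity direction, the clause ``uniform continuity of $f$ yields $\delta>0$ such that any closed interval of length at most $\delta$ contained in $f^{-1}(U)$ maps into $U$'' is vacuous as stated (every subset of $f^{-1}(U)$ maps into $U$) and is not what the argument runs on; what you actually need is that $f^{-1}(p)$ is compact while the components of the open set $f^{-1}(U)\subset[0,1]$ are open, so only finitely many components meet $f^{-1}(p)$, together with the fact that $f$ is a closed map (compact domain, Hausdorff codomain), which is what upgrades the union of the images of those finitely many components from a connected set containing $p$ to a genuine connected neighborhood of $p$ inside $U$. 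In the sufficiency direction, the chain-refinement scheme you describe is indeed the Hocking--Young/Sagan proof, but the lemma you defer --- arranging a finite cover by small subcontinua into a consecutively overlapping chain, with repeated links allowed, compatibly with the previous level of the refinement --- is the entire mathematical content of that implication, so the proposal is an accurate road map rather than a complete proof. Since the paper itself supplies only a citation, this leaves you exactly where the authors stand.
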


Hausdorff spaces that are the continuous image of the unit interval will be called {Peano spaces}, so that, a {\em Peano space} is a Hausdorff, compact, connected, locally connected second-countable topological space. Equivalently, by well-known metrization theorems, a Peano space is a compact, connected, locally connected metrizable topological space. From Peano's example one can easily construct a continuous function from the real line $\mathbb{R}$ onto the plane $\mathbb{R}^2$ (see, e.g., \cite{nga}). If $X$ and $Y$ are topological spaces, by \,$\mathcal{C} (X,Y)$ \, and \,$\mathcal{CS} (X,Y)$ \,we will denote, respectively, the set of all continuous mappings $X \to Y$ and the subfamily of all continuous {\it surjective} mappings.

\vskip .1cm

This paper focuses on studying the algebraic structure of the set of continuous surjections between Euclidean spaces, as well as extensions of Peano curves to arbitrary topological vector spaces that are, in some natural sense, sums of Peano spaces. Before carrying on, let us recall some concepts that, by now, are widely known (see, e.g., \cite{AronGS,AronSe,APS_Studia,isr,BerPS,EGS,JFA2014,LAA2014}).

\begin{definition}[lineability, \cite{AronGS,seoane2006}]
Let $X$ be a topological vector space and $M$ a subset of $X$. Let $\mu$ be a cardinal number.
\begin{enumerate}
\item[\rm (1)] $M$ is said to be {\rm $\mu$-lineable} if $M \cup \{0\}$ contains
    a vector space of dimension \(\mu\). At times, we shall be referring to the set $M$ as simply {\rm lineable} if the existing subspace is infinite dimensional.
\item[\rm (2)] When the above linear space can be chosen to be dense {\rm (}infinite dimensional and closed, resp.{\rm )} in $X$ we shall say that $M$ is {\rm $\mu$-dense-lineable} {\rm (spaceable, resp.)}.
\end{enumerate}
\end{definition}

Moreover, Bernal introduced in \cite{bernal2010} the notion of {\em maximal lineable} (and that of {\it maximal dense-lineable}) in $X$, meaning that, when keeping the above notation, the dimension of the existing linear space equals dim$(X)$. Besides asking for linear spaces one could also study other structures, such as algebrability and some related ones, which were presented in \cite{AronSe,APS_Studia,bartoszewiczglab2012b,seoane2006}.

\begin{definition} \label{def algebrable}
Given an algebra $\mathcal{A}$, a subset $\mathcal{B} \subset \mathcal{A}$, and a cardinal number $\kappa$, we say that $\mathcal{B}$ is:
\begin{enumerate}
\item[$\mathrm{(1)}$] {\rm algebrable} if there is a subalgebra $\mathcal{C}$ of $\mathcal{A}$ so
    that $\mathcal{C} \subset \mathcal{B} \cup \{ 0\}$ and the cardinality of any system of generators of \,$\mathcal{C}$ is infinite.
\item[$\mathrm{(2)}$] {\rm \(\kappa\)-algebrable} if there exists a \(\kappa\)-generated subalgebra $\mathcal{C}$ of $\mathcal{A}$ with $\mathcal{C} \subset \mathcal{B} \cup \{ 0\}$.
\item[$\mathrm{(3)}$] \emph{strongly \(\kappa\)-algebrable} if there exists a \(\kappa\)-generated free algebra \(\mathcal C\) contained in \(\mathcal{B} \cup \{0\}\).
\end{enumerate}
\end{definition}

If $\mathcal{A}$ is commutative, the last sentence means that there is a set $C \subset \mathcal{A}$ with card$(C) = \kappa$
such that, for every finite set $\{x_1, \dots ,x_N \} \subset C$ and every nonzero polynomial $P$
of $N$ variables without constant term, one has $P(x_1, \dots ,x_N) \in \mathcal{B} \setminus \{0\}$.
Of course, being strongly algebrable implies being algebrable (the converse is not true, see \cite{bartoszewiczglab2012b}).
When, in part (3) of Definition \ref{def algebrable}, one can take
as $\kappa$ the supremum of the cardinalities of all algebraically free systems in $\mathcal{A}$,
then $\mathcal{B}$ will be called {\it maximal strongly algebrable} in $\mathcal{A}$.

\vskip .1cm

This paper is arranged in five sections. Section 2 is devoted to the study of the set of continuous surjections from $\mathbb R^m$ to $\mathbb C^n$ (since the case of continuous surjections from $\mathbb R^m$ to $\mathbb R^n$, and its maximal dense-lineability and spaceability, was recently studied and solved in \cite{nga,BernalOrd}). Here we shall improve the results from \cite{nga,BernalOrd} by showing that the subset of continuous surjections from $\mathbb R^m$ to $\mathbb C^n$ such that each value \,$a \in \mathbb{C}^n$ \,is assumed {\it on an unbounded set} of \,$\mathbb{R}^m$\, is, actually, (maximal) strongly algebrable (Theorem \ref{thm_RmCn}). In order to achieve this we shall need to make use of some results and machinery from Complex Analysis, such as the order of growth of an entire function. While doing this, we also provide some new results from Complex Analysis which are of independent interest (see, e.g., Lemma \ref{lemagordo}).

Henceforth, for a given arbitrary non-empty set $\Lambda$, the linear space of complex funcions $\mathbb{C}^\Lambda$ is an algebra when endowed with pointwise product of vectors. 

\vskip .1cm

Section 3 moves on to the study of generalizations of the previous results to topological vector spaces that are, in some natural sense, covered by Peano spaces. We introduce the notion of $\sigma$-Peano space (see Definition \ref{def_sigPeano}) and use it to show (among other results) that given any topological vector space $\mathcal{X}$ that is also a $\sigma$-Peano space, then the set
\[
\left\{ f \in \mathcal{C}\left(\mathbb{R}^m, \mathcal{X} \right) : f^{-1}(\{a\}) \text{ is unbounded for every } a \in \mathcal{X} \right\}
\]
is $\mathfrak{c}$-lineable (hence maximal lineable in $ \mathcal{C}\left(\mathbb{R}^m, \mathcal{X} \right)$), where $\mathfrak{c}$ stands for the continuum (Theorem \ref{thm_Peano_lineab}). In addition, we will show how, by just starting with separable normed spaces, one can obtain $\sigma$-Peano spaces. In Section 4, we analyze Peano spaces in the framework of sequences spaces. The last section is devoted to study Peano space in real and complex function spaces.

\section{Peano curves on Euclidean spaces}

Let $\mathbb{N} := \{1,2, \dots \}$ and $\mathbb{N}_0 := \mathbb{N} \cup \{0 \}$. Along this paper, and for any topological space $X$, we will use the notation
\[
\mathcal{CS}_\infty (\mathbb{R}^m,X) :=
\left\{ f \in \mathcal{C}\left(\mathbb{R}^m,X\right) :
 f^{-1}(\{a\}) \text{ is unbounded for every } a \in X \right\}.
\]
In \cite{nga}, Albuquerque showed the following.

\begin{theorem}[Albuquerque, 2014]\label{Alb}
For every pair $m,n \in \mathbb{N}$, the set \,$\mathcal{CS} (\mathbb{R}^m,\mathbb{R}^n)$ \,is maximal lineable
in the space \,$\mathcal{C} (\mathbb{R}^m,\mathbb{R}^n)$.
\end{theorem}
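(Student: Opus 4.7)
The first move is to reduce to the case $m=1$. The projection $\pi_1:\mathbb{R}^m\to\mathbb{R}$, $(x_1,\dots,x_m)\mapsto x_1$, is continuous and surjective, so pre-composition with $\pi_1$ induces a linear injection $\mathcal{C}(\mathbb{R},\mathbb{R}^n)\hookrightarrow\mathcal{C}(\mathbb{R}^m,\mathbb{R}^n)$ that maps $\mathcal{CS}(\mathbb{R},\mathbb{R}^n)$ into $\mathcal{CS}(\mathbb{R}^m,\mathbb{R}^n)$. Since $\dim_{\mathbb{R}}\mathcal{C}(\mathbb{R}^m,\mathbb{R}^n)=\mathfrak{c}$ (the underlying set has cardinality $\mathfrak{c}$, and the family $\{e^{\lambda x_1}\}_{\lambda\in\mathbb{R}}$ is already linearly independent of that size), it suffices to exhibit a $\mathfrak{c}$-dimensional subspace $V\subset\mathcal{C}(\mathbb{R},\mathbb{R}^n)$ with $V\setminus\{0\}\subset\mathcal{CS}(\mathbb{R},\mathbb{R}^n)$.

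To build $V$ I would assemble $\mathfrak{c}$ Peano-type curves whose supports are almost disjoint at the integer scale. Fix an almost disjoint family $\{A_\alpha\}_{\alpha<\mathfrak{c}}$ of infinite subsets of $\mathbb{N}$ (such a family exists in ZFC; for instance, identifying $\mathbb{N}$ with $\{0,1\}^{<\omega}$, one associates to each $r\in\{0,1\}^{\omega}$ the set of its initial segments). For every $k\in\mathbb{N}$, invoke the Hahn--Mazurkiewicz theorem to fix a continuous surjection $p_k:[k,k+1]\twoheadrightarrow\overline{B}(0,k)\subset\mathbb{R}^n$ satisfying $p_k(k)=p_k(k+1)=0$; this is arranged by running the space-filling construction on the central subinterval $[k+\tfrac14,k+\tfrac34]$ and joining linearly to $0$ on the two ends, using that $0\in\overline{B}(0,k)$. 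Now define
\[
\Phi_\alpha(x)=\begin{cases} p_k(x) & \text{if } x\in[k,k+1] \text{ for some } k\in A_\alpha, \\ 0 & \text{otherwise,}\end{cases}
\]
for each $\alpha<\mathfrak{c}$. The matching endpoint condition makes $\Phi_\alpha$ continuous on $\mathbb{R}$, and since $A_\alpha$ is infinite one has $\Phi_\alpha(\mathbb{R})=\bigcup_{k\in A_\alpha}\overline{B}(0,k)=\mathbb{R}^n$.

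Set $V:=\mathrm{span}\{\Phi_\alpha:\alpha<\mathfrak{c}\}$ and consider any finite combination $F=\sum_{i=1}^N c_i\Phi_{\alpha_i}$ with the $\alpha_i$ distinct. The almost disjoint property implies that $A_{\alpha_i}\cap\bigcup_{j\neq i}A_{\alpha_j}$ is a finite union of finite sets, so the residue $B_i:=A_{\alpha_i}\setminus\bigcup_{j\neq i}A_{\alpha_j}$ is cofinite in $A_{\alpha_i}$ and hence infinite. Pick any $k\in B_i$: on $[k,k+1]$ only the summand $\Phi_{\alpha_i}$ is nonzero, so $F|_{[k,k+1]}=c_i p_k$. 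If $F\equiv 0$ this forces $c_i=0$ for every $i$, which gives linear independence of $\{\Phi_\alpha\}_{\alpha<\mathfrak{c}}$. If some $c_i\neq 0$, say $c_1\neq 0$, then $F([k,k+1])=c_1\overline{B}(0,k)=\overline{B}(0,|c_1|k)$ for every $k\in B_1$, and letting $k\to\infty$ along the infinite set $B_1$ yields $F(\mathbb{R})=\mathbb{R}^n$, so $F\in\mathcal{CS}(\mathbb{R},\mathbb{R}^n)$.

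The only delicate step is the construction of the Peano pieces $p_k$ with the vanishing boundary condition while still surjecting onto the full ball $\overline{B}(0,k)$; everything else is routine bookkeeping once the almost disjoint family is in hand. A pleasant feature of this strategy is that it relies only on having an exhausting increasing sequence of Peano subsets sharing a common basepoint, so it should adapt essentially verbatim when $\mathbb{R}^n$ is replaced by the more general $\sigma$-Peano target spaces studied in Section~3.
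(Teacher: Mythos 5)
Your argument is correct and is essentially the route the paper itself takes: Theorem \ref{Alb} is recovered there as the special case $\mathcal{X}=\mathbb{R}^n$ (with Peano exhaustion $K_k=\overline{B}(0,k)$) of the proof of Theorem \ref{thm_Peano_lineab}, which likewise reduces to $m=1$, glues endpoint-pinned Peano pieces on unit intervals, sums them over an almost disjoint family of infinite subsets of $\mathbb{N}$, and isolates a residual index to show every nonzero combination is surjective. The only cosmetic difference is that the paper uses a doubly indexed family of intervals $I_{k,n}$ so that each generator already lies in $\mathcal{CS}_\infty$, whereas you index the intervals directly by $k\in A_\alpha$ and obtain surjectivity by letting $k\to\infty$ along the residual set.
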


Also, in \cite{BernalOrd}, Bernal and Ord\'o\~nez provided the following natural generalization of the previous result.

\begin{theorem}[Bernal and Ord\'o\~nez, 2014]\label{BerOrd}
For each pair $m,n \in \mathbb{N}$, the set \,$\mathcal{CS}_\infty (\mathbb{R}^m,\mathbb{R}^n)$ \,
is maximal dense-lineable and spaceable in \,$\mathcal{C}\left(\mathbb{R}^m,\mathbb{R}^n\right)$.
In particular, it is maximal lineable in \,$\mathcal{C}\left(\mathbb{R}^m,\mathbb{R}^n\right)$.
\end{theorem}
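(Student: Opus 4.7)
The plan is to build everything from a \emph{mother Peano curve} on the line. For each $j\in\mathbb N$ choose a continuous surjection $\varphi_j\colon[2j,2j+1]\to[-j,j]^n$ with $\varphi_j(2j)=\varphi_j(2j+1)=0$ (a scaled Peano curve). Define $\tilde\varphi\colon\mathbb R\to\mathbb R^n$ to coincide with $\varphi_j$ on each $[2j,2j+1]$ and to vanish elsewhere. Then $\tilde\varphi$ is continuous and $\tilde\varphi^{-1}(\{a\})$ is unbounded for every $a\in\mathbb R^n$, since for all sufficiently large $j$ the cube $[-j,j]^n$ contains $a$. Writing $\pi\colon\mathbb R^m\to\mathbb R$ for the projection onto the first coordinate, the base function $f_0:=\tilde\varphi\circ\pi$ lies in $\mathcal{CS}_\infty(\mathbb R^m,\mathbb R^n)$.

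For \emph{spaceability}, I would split $\mathbb N$ into pairwise disjoint infinite sets $\{N_k\}_{k\ge 1}$, define $\tilde\varphi_k$ by the same recipe but using only the intervals indexed by $N_k$, and set $f_k:=\tilde\varphi_k\circ\pi$. The supports of the $\tilde\varphi_k$ are pairwise disjoint, and any compact subset of $\mathbb R$ meets only finitely many of the intervals $[2j,2j+1]$, so each compact subset of $\mathbb R^m$ meets only finitely many $\operatorname{supp}(f_k)$. Consequently every formal series $g=\sum_{k\ge 1}c_kf_k$ converges in the compact-open topology (being eventually constant on compacts), and its restriction to $\pi^{-1}([2j,2j+1])$ with $j\in N_k$ equals $c_k\,(\tilde\varphi_k\circ\pi)$. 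If some $c_{k_0}\ne 0$, then for every $a\in\mathbb R^n$ and every large enough $j\in N_{k_0}$ one has $a\in[-|c_{k_0}|j,|c_{k_0}|j]^n$, which forces $g^{-1}(\{a\})\cap\pi^{-1}([2j,2j+1])\ne\emptyset$; and $g^{-1}(\{0\})$ contains the unbounded set $\pi^{-1}\bigl(\mathbb R\setminus\bigcup_{j\ge 1}[2j,2j+1]\bigr)$. Hence the closed linear span $V$ of $\{f_k\}_{k\ge 1}$ lies inside $\mathcal{CS}_\infty\cup\{0\}$, and being an infinite-dimensional closed subspace of the separable Fr\'echet space $\mathcal C(\mathbb R^m,\mathbb R^n)$ it satisfies $\dim V=\mathfrak c$ by Baire's theorem, yielding spaceability together with maximal lineability.

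For \emph{maximal dense-lineability} I would invoke the standard criterion (see, e.g., \cite{APS_Studia}): if $X$ is a separable metrizable topological vector space, $A\subseteq X$ is $\mathfrak c$-lineable, $B\subseteq X$ is dense-lineable, $A\cap B=\emptyset$ and $A+B\subseteq A$, then $A$ is maximal dense-lineable in $X$. Put $A:=\mathcal{CS}_\infty(\mathbb R^m,\mathbb R^n)$, already $\mathfrak c$-lineable by the previous step, and $B:=\mathcal C_c(\mathbb R^m,\mathbb R^n)\setminus\{0\}$. Since $B\cup\{0\}$ is a vector subspace that is dense in the compact-open topology (truncate with bump functions), $B$ is dense-lineable. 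For $f\in A$ and $g\in B$ one has $(f+g)^{-1}(\{a\})\supseteq f^{-1}(\{a\})\setminus\operatorname{supp}(g)$, which is unbounded because $f^{-1}(\{a\})$ is unbounded and $\operatorname{supp}(g)$ is compact; hence $A+B\subseteq A$. Moreover $A\cap B=\emptyset$, because any element of $B$ has bounded image and in particular cannot be surjective onto $\mathbb R^n$. The main technical point throughout is verifying that every nonzero element of $V$ really belongs to $\mathcal{CS}_\infty$; this rests on the ``growing cube'' property of the scaled Peano pieces $\varphi_j$ together with the local finiteness of their supports.
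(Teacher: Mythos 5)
Your argument is correct, but be aware that the paper contains no proof of this statement: Theorem \ref{BerOrd} is imported verbatim from \cite{BernalOrd}, where it is derived from general lineability criteria. Your construction is therefore a self-contained alternative, and it is closest in spirit to the paper's own proof of the more general Theorem \ref{thm_Peano_lineab}, with two genuine differences. First, where the paper uses an \emph{almost disjoint} family $\{J_\lambda\}_{\lambda\in\Lambda}$ of infinite subsets of $\mathbb{N}$ to produce $\mathfrak{c}$-many linearly independent combinations (obtaining only lineability), you take \emph{pairwise disjoint} infinite sets $N_k$ and pass to the closed span, which buys spaceability directly; the price is that you must verify that every element of the closed span really is a locally finite sum $\sum_k c_k f_k$, not just that such sums converge. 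This does hold --- on any single block $\pi^{-1}([2j,2j+1])$ with $j\in N_k$ every approximating finite combination is a scalar multiple of the fixed nonzero function $\varphi_j\circ\pi$, so the coefficients $c_k^{(l)}$ of an approximating sequence converge and identify the limit --- but it deserves an explicit line, since spaceability requires controlling all of $V\setminus\{0\}$ and not only the formal series you exhibit. Second, to see that a nonzero scalar multiple of a block function still covers everything, the paper's Section~3 argument uses the identity $\alpha\mathcal{X}=\mathcal{X}$ for a single fully surjective block, whereas you use the growing cubes $[-j,j]^n$; both work, and yours simultaneously gives unboundedness of every fibre with no extra effort. Your dense-lineability step, via the criterion ``$A$ $\mathfrak{c}$-lineable, $B$ dense-lineable, $A+B\subseteq A$, $A\cap B=\varnothing$'' applied with $B=\mathcal{C}_c(\mathbb{R}^m,\mathbb{R}^n)\setminus\{0\}$, is essentially the route taken in \cite{BernalOrd} itself (that is where the criterion is stated, rather than in \cite{APS_Studia}), and all of its hypotheses are checked correctly.
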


A natural question would be to ask about the algebrability of the set $\mathcal{CS}_\infty(\mathbb{R}^m,\mathbb{R}^n)$. Clearly, algebrability cannot be obtained in the real context, since for any $f \in \mathbb{R}^\mathbb{R}$, $f^2$ is  always non-negative. However, in the complex frame it is actually possible to obtain algebrability. Before that, let us recall some results related to the growth of an entire function (see, e.g., \cite[p.9]{boas}).

\begin{remark}\label{rem_order}
{\rm ({\it Order of an entire function and consequences}). By \,$\mathcal{H}\left(\mathbb{C}\right)$ we denote the space of all entire functions from $\mathbb{C}$ to $\mathbb{C}$. For $r>0$ and $f \in \mathcal{H}\left(\mathbb{C}\right)$, we set $M\left(f,r\right):= \max_{|z|=r} \vert f(z)\vert$.
The function $M(f,\cdot)$ increases strictly towards $+\infty$ as soon as $f$ is non-constant.
\begin{itemize}
\item[(a)] The (growth) {\it order} $\rho (f)$ of an entire function $f \in \mathcal{H}\left(\mathbb{C}\right)$ is defined as the infimum of all positive real numbers $\alpha$ with the following property: $M\left(f,r\right)<e^{r^\alpha}$ \,for all $r>r(\alpha)>0$.
    Note that $\rho (f) \in [0,+\infty ]$. Trivially, the order of a constant map is $0$. If $f$ is non-constant, we have
\[
\rho(f) =  \limsup_{r\to+\infty} \frac{\log\log M\left(f,r\right)}{\log r}.
\]
\item[(b)] If $f(z) = \sum_{n=1}^\infty a_n z^n$ is the MacLaurin series expansion of $f$ then
\[
\rho(f) = \limsup_{n\to+\infty} \frac{n\log n}{\log\left(1/|a_n|\right)}.
\]
In particular, given $\displaystyle \alpha > 0, \, f_\alpha(z):= \sum_{n=1}^\infty \frac{z^n}{n^{n/\alpha}}$ \,satisfies \,$\rho(f_\alpha)=\alpha$.
\item[(c)] For every $f \in \mathcal{H}\left(\mathbb{C}\right)$, every $N \in \mathbb{N}$ and every $\alpha \in \mathbb{C}\setminus \{0\}$,
\[
\rho\left(\alpha f^N\right) = \rho\left(f\right).
\]
\item[(d)] For every  $f,g \in \mathcal{H}\left(\mathbb{C}\right)$,
$$\rho(f\cdot g) \leq \max\{\rho(f),\rho(g)\}$$ and $$\rho(f+g) \leq \max\{\rho(f),\rho(g)\}.$$

\noindent Moreover, if $f$ and $g$ have different orders, then
\[
\rho(f+g) = \max \{\rho(f),\rho(g)\} = \rho(f\cdot g),
\]
where it is assumed $f \not\equiv 0 \not\equiv g$ for the second equality.
\item[(e)] (Corollary to Hadamard's theorem): Every non-constant entire function $f$ with $\infty > \rho(f) \notin \mathbb{N}$ is surjective (see, e.g., \cite[Corollary, p.211]{ahlfors} or \cite[Thm 9.3.10]{greene.krantz}).
\end{itemize}
}
\end{remark}

As a consequence of the previous properties, we obtain the following result (of independent interest) concerning the order of a polynomial of several variables evaluated on entire functions with different orders. First, we need to establish some notation: for a non-constant polynomial in $M$ complex variables $P \in \mathbb{C}[z_1,\dots,z_M]$, let $\mathcal{I}_P \subset \{1,\ldots,M\}$ be the set of indexes $k$ such that the variable $z_k$ explicitly appears in some monomial (with non-zero coefficient) of $P$;
that is, $\mathcal{I}_P = \{n \in \{1, \dots ,M\}: \, {\partial P \over \partial z_n} \not\equiv 0\}$.

\begin{lemma}\label{lemagordo}
Let $f_1, \dots ,f_M \in \mathcal{H}(\mathbb{C})$ such that $\rho(f_i)\neq\rho(f_j)$ whenever $i \neq j$. Then
\[
\rho \left(P \left(f_1,\dots,f_M\right)\right) = \max_{k \in \mathcal{I}_P} \rho\left(f_k\right),
\]
for all non-constant polynomials $P \in \mathbb{C}[z_1,\dots,z_M]$. Moreover, $\left(f_k\right)_{k=1}^M$ is algebraically independent and generates a free algebra.
\end{lemma}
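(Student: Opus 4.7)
The plan is to induct on $M$, reducing the $M$-variable case to the $M-1$ case by viewing $P$ as a polynomial in the variable corresponding to the highest-order $f_i$. After reordering so that $\rho(f_1)<\rho(f_2)<\cdots<\rho(f_M)$, the base case $M=1$ asks for $\rho(P(f_1))=\rho(f_1)$ for any non-constant $P\in\mathbb{C}[z_1]$ of degree $d\geq 1$: the upper bound $\rho(P(f_1))\leq \rho(f_1)$ is immediate from Remark~\ref{rem_order}(c)--(d) applied monomial-by-monomial, while the lower bound follows from the elementary estimate $|P(w)|\geq \tfrac{|a_d|}{2}|w|^d$ for $|w|$ large, which, evaluated at the point of the circle $|z|=r$ where $|f_1|$ achieves its maximum, gives $M(P(f_1),r)\geq \tfrac{|a_d|}{2}M(f_1,r)^d$ for $r$ large, hence $\rho(P(f_1))\geq \rho(f_1^d)=\rho(f_1)$.

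For the inductive step with $M\geq 2$: if $M\notin\mathcal{I}_P$ the claim follows directly from the inductive hypothesis applied to $P$ in the variables $z_1,\dots,z_{M-1}$. Otherwise, write $P=\sum_{e=0}^d g_e(z_1,\dots,z_{M-1})\,z_M^e$ with $g_d\not\equiv 0$ and $d\geq 1$, and set $G_e:=g_e(f_1,\dots,f_{M-1})$. By inductive hypothesis, each $G_e$ is either a constant (of order $0$) or has $\rho(G_e)\leq \rho(f_{M-1})<\rho(f_M)$, and $G_d\not\equiv 0$ thanks to the inductive algebraic independence of $f_1,\dots,f_{M-1}$. The lemma thus reduces to the sub-claim that, if $h_0$ is entire of order $\gamma>0$ and $W_0,\dots,W_d$ are entire with $\rho(W_e)<\gamma$, $W_d\not\equiv 0$, and $d\geq 1$, then $\rho\!\left(\sum_{e=0}^d W_e h_0^e\right)=\gamma$; granted this sub-claim, Remark~\ref{rem_order}(d) applied to $\sum_e G_e f_M^e$ (order $\rho(f_M)$) against any part of $P$ omitting $z_M$ (order $<\rho(f_M)$) yields the claimed identity.

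For algebraic independence (equivalently, freeness of the subalgebra generated by the $f_k$), suppose $Q(f_1,\dots,f_M)\equiv 0$ for some non-constant $Q$. The order identity forces $\max_{k\in\mathcal{I}_Q}\rho(f_k)=0$; since the orders $\rho(f_k)$ are distinct, at most one of them is zero, so $\mathcal{I}_Q$ reduces to a single index $k_0$ and $Q$ is a non-constant polynomial in the single variable $z_{k_0}$. But then $Q(f_{k_0})\equiv 0$ would force the non-constant entire function $f_{k_0}$ to take only the finitely many roots of $Q$ as values, a contradiction.

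The main obstacle is the lower bound in the sub-claim: when several of the terms $W_e h_0^e$ share the same order $\gamma$, Remark~\ref{rem_order}(d) only yields $\rho\!\left(\sum W_e h_0^e\right)\leq \gamma$ and in principle permits complete cancellation (as with $e^z\cdot e^{-z}=1$). To rule this out I would argue by contradiction: if $H:=\sum_{e=0}^d W_e h_0^e$ had order strictly less than $\gamma$, then $h_0$ would be a root of the polynomial $W_d\,Y^d+\cdots+W_1\,Y+(W_0-H)\in\mathcal{H}(\mathbb{C})[Y]$ with entire coefficients all of order $<\gamma$, and the classical fact that an entire function satisfying a polynomial equation with entire coefficients of order $<\gamma$ itself has order $<\gamma$ (via the quadratic formula for small degree and a Nevanlinna-theoretic argument on the characteristic function in general) would give $\rho(h_0)<\gamma$, contradicting $\rho(h_0)=\gamma$. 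This is the place where complex-analytic input genuinely beyond Remark~\ref{rem_order}(a)--(e) must enter.
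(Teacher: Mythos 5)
Your proposal is correct in outline and uses the same basic decomposition as the paper: both reduce the general case to a one-variable polynomial $\sum_{e=0}^{d}W_e\,f_N^{\,e}$ in the top-order function $f_N$ with entire coefficients $W_e$ of strictly smaller order, and both must then rule out cancellation among the terms with $e\geq 1$, which all share the order $\rho(f_N)$. The divergence is in how that no-cancellation sub-claim is settled. You outsource it to a Valiron--Mohon'ko/Clunie-type result from Nevanlinna theory; this does work (the additive error term $T\bigl(r,\sum_e W_e h^e\bigr)=d\,T(r,h)+O\bigl(\max_e T(r,W_e)\bigr)$, evaluated along a sequence where $T(r_n,h)>r_n^{\gamma-\varepsilon}$ while $T(r_n,W_e)<r_n^{\gamma-2\varepsilon}$, even gives $\rho(H)\geq\gamma$ directly, with no contradiction needed), but it imports machinery well beyond Remark~\ref{rem_order} and is cited rather than proved --- and the aside about the quadratic formula is a dead end, since roots of a quadratic with entire coefficients involve square roots that need not be entire. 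The paper instead settles the sub-claim elementarily: since $\rho(W_d\cdot f_N)=\rho_N$ exactly (product of two functions of different orders), one may pick $z_n$ with $|z_n|=r_n\to\infty$ at which $|W_d(z_n)f_N(z_n)|>e^{r_n^{\rho_N-\varepsilon}}$, while every coefficient satisfies $|W_e(z_n)|\leq M(W_e,r_n)<e^{r_n^{\rho_N-2\varepsilon}}$ uniformly on the circle; at such points $|f_N(z_n)|$ is forced to be so large that the leading term dominates the sum of all lower-degree terms pointwise, yielding the lower bound on the order. Your route is shorter if one is willing to cite Nevanlinna theory; the paper's buys a self-contained argument from Remark~\ref{rem_order} alone. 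One point in your favour: you make explicit that $G_d\not\equiv 0$ requires carrying the algebraic-independence statement through the induction, which the paper glosses over; note, though, that both your independence argument and the lemma as stated implicitly need the $f_k$ to be non-constant, since a constant function of order $0$ is annihilated by a degree-one polynomial.
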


\begin{proof}
We start by proving the result for just one entire map. Let us fix a non-constant entire function. We shall prove that $\rho\left(P(f)\right) = \rho(f)$, for any non-constant polynomial $P \in \mathbb{C}[z]$. Properties (c) and (d) of Remark \ref{rem_order} assure that $\rho(P(f)) \leq \rho(f)$. So, we just need to prove the reverse inequality. We may write $P(z) = a_m z^m + \cdots + a_1 z + a_0$, with $m>0$ and $a_m \in \mathbb{C}\setminus \{0\}$. Since
\[
\lim_{|z|\to+\infty} \left\vert \frac{P(z)}{z^m} \right\vert = |a_m| >0,
\]
we have
\[
\left\vert P(z)\right\vert > c \cdot |z|^m
\]
for $|z|$ large enough, where $c := |a_m|/2$. Plainly, we may suppose $\rho (f) > 0$. Let $\varepsilon>0$ be such that $0<\varepsilon<\rho(f)$. By the definition of order, there exist a sequence of positive radii $\left(r_n\right)_n$ going to $+\infty$ and a complex sequence $\left(z_n\right)_n$
such that $|z_n| = r_n$ and
\[
\left\vert f(z_n) \right\vert = M\left(f,r_n\right) > e^{r_n^{\rho(f)-\varepsilon}} \quad (n=1,2, \dots ).
\]
Thus, for large enough values of $n$,
\[
M\left(P(f),r_n\right)
\geq \left\vert P(f)(z_n)\right\vert
> c \cdot \left\vert f(z_n) \right\vert^m
> c \cdot e^{m r_n^{\rho(f)-\varepsilon}}.
\]
Consequently,
\begin{align*}
\rho\left(P(f)\right)
& \geq \limsup_{r \to +\infty} \frac{\log \log M\left(P(f),r\right)}{\log r} \\
& \geq \limsup_{n \to \infty} \frac{\log \log M\left(P(f),r_n\right)}{\log r_n} \\
& \geq \lim_{n \to \infty} \frac{\log \log (c \cdot e^{m r_n^{\rho(f)-\varepsilon}}) }{\log r_n} \\
& \geq \rho(f) - \varepsilon.
\end{align*}
This leads us to obtain the remaining inequality $\rho\left(P(f)\right) \geq \rho(f)$ and, therefore, conclude the proof for the case $M=1$. Next, let us deal with the general case: we may assume that the functions $f_1,\dots,f_M \in \mathcal{H}(\mathbb{C})$ satisfy $\rho(f_1)<\rho(f_2)<\dots<\rho(f_M)$. Given a non-constant polynomial $P \in \mathbb{C}[z_1,\dots,z_M]$, we just need to prove, as earlier, that
\[
\rho \left(P \left(f_1,\dots,f_M\right)\right) \geq \max_{k \in \mathcal{I}_P} \rho\left(f_k\right).
\]
Let be $N = \max_{k \in \mathcal{I}_P}$. We can write
\[
P(f_1,\dots,f_M) = \sum_{i=0}^m P_i(f_1,\dots,f_{N-1}) \cdot f_N^i, \eqno (1)
\]
with some $m>0$ and $P_m \in \mathbb{C}[z_1,\dots,z_{N-1}] \setminus \{0\}$. Let $\varepsilon>0$ such that
\[
\rho(f_{N-1}) < \rho(f_N) - 2\varepsilon < \rho(f_N) =: \rho_N.
\]
Now, parts (c) and (d) of Remark \ref{rem_order} allow us to estimate the order of each one of terms of the sum in (1):
\[
\rho \left(P_i(f_1,\dots,f_{N-1})\right) \leq \rho(f_{N-1}) < \rho_N \ \text{ for all }  i=0,\dots,m \,\,\, and
\]
\[
\rho \left(P_m(f_1,\dots,f_{N-1}) \cdot f_N\right) = \rho_N.
\]
As before, there exist a sequence of positive real numbers, $\left(r_n\right)_n$, going to $+\infty$ and complex numbers $z_n$, of modulus $r_n$, such that, for $n$ large enough, the following inequalities hold:
\[
\left\vert P_m(f_1,\dots,f_{N-1})(z_n)\right\vert \cdot \left\vert f_N (z_n) \right\vert
> e^{r_n^{\rho_N-\varepsilon}} \,\,\, \hbox{and}
\]
\[
\left\vert P_i(f_1,\dots,f_{N-1})(z_n)\right\vert < e^{r_n^{\rho_N-2\varepsilon}} \ \ \text{ for all } i=0,\dots,m.
\]
In particular,
\[
\left\vert f_N (z_n) \right\vert > e^{r_n^{\rho_N-\varepsilon} - r_n^{\rho_N-2\varepsilon}} \ \ \text{ for } n \text{ large}.
\]
Thus,
\begin{align*}
& \left\vert P(f_1,\dots,f_M)(z_n)\right\vert \geq \\
& \geq \left\vert P_m(f_1,\dots,f_{N-1})(z_n)\right\vert \cdot \left\vert f_N (z_n) \right\vert^m
 - \sum_{i=0}^{m-1} \left\vert P_i(f_1,\dots,f_{N-1})(z_n)\right\vert \cdot \left\vert f_N (z_n) \right\vert^i \\
& > e^{r_n^{\rho_N-\varepsilon}} \cdot \left\vert f_N (z_n) \right\vert^{m-1}
 - e^{r_n^{\rho_N-2\varepsilon}} \cdot \sum_{i=0}^{m-1} \left\vert f_N (z_n) \right\vert^i \\
& = e^{r_n^{\rho_N-\varepsilon}} \cdot \left\vert f_N (z_n) \right\vert^{m-1} \cdot
 \left[1 - e^{r_n^{\rho_N-2\varepsilon} - r_n^{\rho_N-\varepsilon}} \cdot \sum_{i=0}^{m-1} \left\vert f_N (z_n) \right\vert^{i- (m-1)} \right].
\end{align*}
Note that the expression inside the brackets in the last formula tends to $1$ as $n \to \infty$. Thus, it is greater than some constant
$C_1 \in (0,1)$ for $n$ large enough. A similar argument also provides
\[
e^{r_n^{\rho_N-\varepsilon}} \cdot \left\vert f_N (z_n) \right\vert^{m-1} \geq C_2 \cdot e^{m r_n^{\rho_N-\varepsilon}},
\]
for $n$ large enough and some constant $C_2 > 0$. Consequently, if $C := C_1 \, C_2$, one has for $n$ large that
\[
M \left( P(f_1,\dots,f_M)(z_n), r_n \right) \geq C \cdot e^{m r_n^{\rho_N-\varepsilon}},
\]
which implies
\begin{align*}
\rho\left(P(f_1,\dots,f_M)\right)
& = \limsup_{r \to +\infty} \frac{\log \log M\left(P(f_1,\dots,f_M),r\right)}{\log r} \\
& \geq \limsup_{n \to \infty} \frac{\log \log M\left(P(f_1,\dots,f_M),r_n\right)}{\log r_n} \\
& \geq \lim_{n \to \infty} \frac{\log \log (C \cdot e^{m r_n^{\rho_N-\varepsilon}}) }{\log r_n} \\
& \geq \rho_N - \varepsilon.
\end{align*}
Letting $\varepsilon \to 0$, the above inequalities prove
\[
\rho\left(P(f_1,\dots,f_M)\right) \geq \rho_N = \max_{k \in \mathcal{I}(P)} \rho\left(f_k\right)
\]
and the proof follows straightforwardly.
\end{proof}

From this lemma we can prove that $\mathcal{CS}_\infty\left(\mathbb{R}^m,\mathbb{C}^n\right)$ is maximal strongly algebrable, which means that the set is strongly $\mathfrak c$-algebrable.

\begin{theorem} \label{thm_RmCn}
For every \,$m \in \mathbb{N}$, the set \,$\mathcal{CS}_\infty\left(\mathbb{R}^m,\mathbb{C}^n\right)$ \,is maximal strongly algebrable in \,$\mathcal{C}\left(\mathbb{R}^m,\mathbb{C}^n\right)$.
\end{theorem}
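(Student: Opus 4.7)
The plan is to combine Lemma \ref{lemagordo} with Hadamard's corollary (Remark \ref{rem_order}(e)) to produce a $\mathfrak{c}$-parameter family of entire functions that is both algebraically free and whose every nontrivial polynomial combination is a surjection $\mathbb{C}\to\mathbb{C}$, and then to transport this family to $\mathcal{C}(\mathbb{R}^m,\mathbb{C}^n)$ via a fixed ``master'' map $\varphi\in\mathcal{CS}_\infty(\mathbb{R}^m,\mathbb{C}^n)$ granted by Theorem \ref{BerOrd} (after identifying $\mathbb{C}^n\cong\mathbb{R}^{2n}$). Set $\Gamma:=(0,+\infty)\setminus\mathbb{N}$, which has cardinality $\mathfrak{c}$, and for each $\alpha\in\Gamma$ let $f_\alpha(z):=\sum_{k=1}^\infty z^k/k^{k/\alpha}$; by Remark \ref{rem_order}(b), $f_\alpha$ is entire of order exactly $\alpha$. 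Writing $\varphi=(\varphi_1,\dots,\varphi_n)$, I would define
\[
F_\alpha(x):=\bigl(f_\alpha(\varphi_1(x)),\dots,f_\alpha(\varphi_n(x))\bigr)\in\mathcal{C}(\mathbb{R}^m,\mathbb{C}^n),
\]
and claim that $\{F_\alpha:\alpha\in\Gamma\}$ generates a free subalgebra of $\mathcal{C}(\mathbb{R}^m,\mathbb{C}^n)$ (under the componentwise product inherited from $\mathbb{C}^n$) contained in $\mathcal{CS}_\infty(\mathbb{R}^m,\mathbb{C}^n)\cup\{0\}$.

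The crux is the following. Given distinct $\alpha_1,\dots,\alpha_M\in\Gamma$ and a nonzero polynomial $P\in\mathbb{C}[z_1,\dots,z_M]$ without constant term, the componentwise product forces the $j$-th coordinate of $P(F_{\alpha_1},\dots,F_{\alpha_M})(x)$ to be $g(\varphi_j(x))$, where $g:=P(f_{\alpha_1},\dots,f_{\alpha_M})\in\mathcal{H}(\mathbb{C})$. Lemma \ref{lemagordo} then tells us that $g$ is non-constant of order $\max_{k\in\mathcal{I}_P}\alpha_k\in\Gamma$, hence of finite non-integer order, so Hadamard's corollary makes $g:\mathbb{C}\to\mathbb{C}$ surjective. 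Given $a=(a_1,\dots,a_n)\in\mathbb{C}^n$, pick $z_j\in g^{-1}(\{a_j\})$ and, by surjectivity of $\varphi$, choose $x_0\in\varphi^{-1}(\{(z_1,\dots,z_n)\})$; since $\varphi\in\mathcal{CS}_\infty$, this latter preimage is unbounded and sits inside $[P(F_{\alpha_1},\dots,F_{\alpha_M})]^{-1}(\{a\})$, giving the required unboundedness (and, a fortiori, non-vanishing of $P(F_{\alpha_1},\dots,F_{\alpha_M})$).

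Maximality follows at once since $|\Gamma|=\mathfrak{c}=|\mathcal{C}(\mathbb{R}^m,\mathbb{C}^n)|$ bounds the cardinality of any algebraically free system from above. The one delicate point is the bookkeeping: one must verify that polynomial operations on the $F_\alpha$'s collapse, coordinate by coordinate, to the same scalar polynomial $g$ applied to $\varphi_j$, which is where the componentwise nature of the product on $\mathbb{C}^n$ is essential and where algebrability in the \emph{real} setting breaks down (since Lemma \ref{lemagordo} and Hadamard's corollary are genuinely complex-analytic tools with no real counterpart). Once this observation is in place, the argument reduces to the one-variable surjectivity fact for $g$, and the rest is routine.
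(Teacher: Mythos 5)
Your proposal is correct and follows essentially the same route as the paper: entire functions of prescribed non-integer orders, Lemma \ref{lemagordo} for freeness and order computation, Hadamard's corollary for surjectivity, and composition with a fixed map from $\mathcal{CS}_\infty$. The only cosmetic difference is that you carry out the componentwise construction in $\mathbb{C}^n$ explicitly, whereas the paper first reduces to $n=m=1$ and treats the coordinates implicitly.
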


\begin{proof}
It suffices to consider the case $n=m=1$. In fact, the case $m>1$ follows from the $m=1$ by considering the projection map from $\mathbb{R}^m$ to $\mathbb{R}$. The case $n>1$ is obtained from $n=1$ by working on each coordinate.

\vskip .1cm

For each $s>0$, select an entire function \,$\varphi_s : \mathbb{C} \to \mathbb{C}$ \,of order $s>0$.
Let $A := (0,+\infty ) \setminus \mathbb{N}$. Lemma \ref{lemagordo} assures that the set
$\left\{\varphi_s\right\}_{s \in A}$ is a system of cardinality $\mathfrak{c}$ generating a free algebra $\mathcal{A}$.

\vskip .1cm

Next, notice that any element \,$\varphi \in \mathcal{A} \setminus \{0\}$ \,may be written as a non-constant polynomial \,$P$
\,without constant term evaluated on some $\varphi_{s_1}$, $\varphi_{s_2}$, $\ldots$, $\varphi_{s_N}$:
\[
\varphi = P(\varphi_{s_1}, \varphi_{s_2}, \ldots, \varphi_{s_N}) = \sum_{|\alpha|\leq m} c_\alpha \cdot \varphi_{s_1}^{\alpha_1} \cdot \varphi_{s_2}^{\alpha_2} \cdots \varphi_{s_N}^{\alpha_N}.
\]
By Lemma \ref{lemagordo}, there exists \,$j \in \{1,\dots,N\}$ \,such that \,$\rho (\varphi ) = \rho (\varphi_{s_j}) = s_j \notin \mathbb{N}_0$.
Thus Remark \ref{rem_order} (e) guarantees that $\varphi$ is surjective. Finally, take any
\,$F \in \mathcal{CS}_\infty\left(\mathbb{R},\mathbb{C}\right)$ \,and consider the algebra
$$\mathcal{B} := \left\{\varphi\circ F\right\}_{\varphi\in\mathcal{A}}.$$
Then it is plain that
\,$\mathcal{B}$ \,is freely $\mathfrak{c}$-generated and that \,$\mathcal{B} \setminus \{0\} \subset \mathcal{CS}_\infty\left(\mathbb{R},\mathbb{C}\right)$,
as required.
\end{proof}

\section{$\sigma$-Peano spaces}

As mentioned in the introduction, the theorem of Hahn and Ma\-zur-kie-wicz provides a topological characterization of Hausdorff topological spaces that are continuous image of the unit interval $I:=[0,1]$: these are precisely the Peano spaces. In this section we investigate to\-po\-lo\-gi\-cal spaces that are continuous image of the {\it real line} and for this task the following definition seems natural.

\begin{definition} \label{def_sigPeano}
A topological space $X$ is a {\em $\sigma$-Peano space} if there exists an increasing sequence of subsets
\[
K_1 \subset K_2 \subset \dots \subset K_m \subset \dots \subset X,
\]
such that each one of them is a Peano space {\rm (}endowed with the topology inherited from $X${\rm )} and its union
amounts to the whole space, that is, $\bigcup_{n \in \mathbb{N}} K_n  = X$.
\end{definition}

From now on, CS will stand for an abbreviation of ``continuous surjective''.

\begin{proposition}\label{thm_sigmaPeano}
Let $X$ be a Hausdorff topological space. The following assertions are equivalent:
\begin{itemize}
\item[\rm (a)] $X$ is a $\sigma$-Peano space.
\item[\rm (b)] $\mathcal{CS}_\infty\left(\mathbb{R},X\right) \neq \varnothing$.
\item[\rm (c)] $\mathcal{CS}\left(\mathbb{R},X\right) \neq \varnothing$.
\end{itemize}
\end{proposition}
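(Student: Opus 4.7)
The plan is to establish (b) $\Rightarrow$ (c) $\Rightarrow$ (a) $\Rightarrow$ (b). The implication (b) $\Rightarrow$ (c) is immediate from the inclusion $\mathcal{CS}_\infty(\mathbb{R},X) \subset \mathcal{CS}(\mathbb{R},X)$.

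For (c) $\Rightarrow$ (a), given $f \in \mathcal{CS}(\mathbb{R},X)$ I would simply set $K_n := f([-n,n])$. Each $K_n$ is Hausdorff as a subspace of $X$, and compact and connected as a continuous image of $[-n,n]$; since it is a nonempty Hausdorff continuous image of the Peano space $[-n,n]$, the Hahn--Mazurkiewicz theorem stated in the excerpt forces $K_n$ itself to be a Peano space. Monotonicity $K_n \subset K_{n+1}$ is obvious, and $\bigcup_n K_n = f(\mathbb{R}) = X$ by surjectivity, so $X$ is $\sigma$-Peano.

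The main work is in (a) $\Rightarrow$ (b). Given an increasing exhaustion $X = \bigcup_n K_n$ by Peano spaces, I would discard any initial empty terms (assuming $X \neq \varnothing$) and fix a basepoint $x_0 \in K_1$, which then lies in every $K_n$ by monotonicity. Using that every Peano space is path connected, I can convert each Peano curve $h_n : [0,1] \to K_n$ supplied by Hahn--Mazurkiewicz into a surjective loop $g_n : [0,1] \to K_n$ based at $x_0$: prepend a path in $K_n$ from $x_0$ to $h_n(0)$, follow $h_n$, append a path from $h_n(1)$ back to $x_0$, and reparameterize onto $[0,1]$. The resulting $g_n$ still satisfies $g_n([0,1]) = K_n$ together with $g_n(0) = g_n(1) = x_0$.

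Now I would pick an indexing $q : \mathbb{Z} \to \mathbb{N}$ such that each $m \in \mathbb{N}$ is attained by $q$ on a set unbounded both above and below in $\mathbb{Z}$, and paste the loops by declaring $f(t) := g_{q(k)}(t-k)$ for $t \in [k, k+1]$ and $k \in \mathbb{Z}$. The common endpoint value $x_0$ makes $f : \mathbb{R} \to X$ continuous on every integer, hence on all of $\mathbb{R}$. For $x \in X$, choose $m$ with $x \in K_m$: whenever $q(k) \geq m$ one has $x \in K_m \subset K_{q(k)}$, so $g_{q(k)}$ attains $x$ and $f^{-1}(\{x\}) \cap [k,k+1] \neq \varnothing$; since such $k$ accumulate at $\pm\infty$, the fiber $f^{-1}(\{x\})$ is unbounded, placing $f$ in $\mathcal{CS}_\infty(\mathbb{R},X)$. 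The main obstacle is exactly this gluing step: one must turn each Peano curve $h_n$ into a loop at a \emph{single} basepoint that belongs to every $K_n$, which is where both the monotonicity of the exhaustion and the path-connectedness of Peano spaces are essential; once that is arranged, the indexing trick on $\mathbb{Z}$ simultaneously delivers surjectivity and unboundedness of every fiber.
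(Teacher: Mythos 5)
Your proposal is correct and follows essentially the same route as the paper: the implication (c) $\Rightarrow$ (a) via $K_n := f([-n,n])$ is identical, and for (a) $\Rightarrow$ (b) the paper likewise uses arcwise connectedness of Peano spaces to build surjective loops based at a common point $x_0 \in \bigcap_n K_n$ and glues them along consecutive unit intervals, obtaining unbounded fibers from the monotonicity $K_m \subset K_n$ rather than from your indexing $q:\mathbb{Z}\to\mathbb{N}$ --- a cosmetic difference only.
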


\begin{proof} \rm (a) $\Rightarrow$ \rm (b): Let $K_1 \subset K_2 \subset \cdots$ be an increasing sequence of Peano spaces in $X$ such that its union is the whole $X$. Fix a point $x_0 \in X$. Without loss of generality, we may suppose that $x_0 \in K_n$, for all $n\geq1$. Since Peano spaces are arcwise connected \cite[Theorem 31.2]{willard}, for each $n\geq 1$ there is a Peano map $f_n:\left[n,n+1\right]\to K_n$, that starts and ends at $x_0$, {i.e.}, $f_n(n)=x_0=f_n(n+1)$. Joining all these Peano maps with the constant path $t \in (-\infty,0] \mapsto x_0 \in K_1$, one obtains a CS map $F:\mathbb{R} \to X$.

\vskip.1cm

\noindent\rm (c) $\Rightarrow$ \rm (a): Let $f$ be a map in $\mathcal{CS}\left(\mathbb{R},X\right)$. Therefore,
\[
X  = f \left(\mathbb{R}\right) = f \left( \bigcup_{n\in \mathbb{N}}[-n,n]\right) = \bigcup_{n \in \mathbb{N}} f\left([-n,n]\right).
\]

\vskip.1cm

\noindent Since \rm (b) $\Rightarrow$ \rm (c) is obvious, the proof is done.
\end{proof}

\begin{example} {\rm (}Spaces that are $\sigma$-Peano{\rm ).} \label{exPeano}
\begin{itemize}
\item[\rm (a)] {\rm Trivially, Euclidean spaces $\mathbb{R}^n$ and Peano spaces are $\sigma$-Peano. For $1< p \leq\infty$, the Hilbert cube
\[
\mathrm{C}_p:= \prod_{n \in \mathbb{N}} \left[-\frac1n,\frac1n\right] \subset \ell_p,
\]
considered as a topological subspace of $\ell_p$,
is a compact metric space, so it is a Peano space. For each natural $k$, let $k \, \mathrm{C}_p$ be the Hilbert cube after applying an ``$k$-homogeneous dilation'' to it. Therefore, the union of Hilbert cubes $\bigcup_{k \in \mathbb{N}} k \, \mathrm{C}_p$ is a $\sigma$-Peano topological vector space, when endowed with the topology inherited from $\ell_p$.}

\item[\rm (b)] {\rm Let $\mathcal{X}$ be a separable topological vector space and $\mathcal{X}'$ be its topological dual endowed with the weak$^\ast$-topology. If $\mathcal{X}'$ is covered by an increasing sequence of (weak$^\ast$-)compact subsets, then it is $\sigma$-Peano. Indeed, when the topological dual is endowed with the weak$^\ast$-topology, its weak$^\ast$-compact subsets are metrizable (see, for instance, \cite[Theorem 3.16]{rudin}). Therefore, it will be a $\sigma$-Peano space. Clearly, this holds on the topological dual (endowed with weak$^\ast$-topology) of separable normed spaces.}
\end{itemize}

\end{example}

Recall that an {\it {\rm F}-space} is a topological vector space with complete translation-invariant metric which provides its topology.


\begin{example}{\rm (}Spaces that are not $\sigma$-Peano{\rm ).} \label{ex_notPeano}
\begin{itemize} 
\item[\rm (a)] Every $\sigma$-Peano space is se\-pa\-ra\-ble. Indeed, continuity preserves separability. In particular, $\ell_\infty$ is not $\sigma$-Peano.

\item[\rm (b)] No infinite dimensional $\mathrm{F}$-space is $\sigma$-compact (i.e., a countable union of compact spaces), and, therefore, is not $\sigma$-Peano. This is a consequence of the Baire category theorem combined with the fact that on infinite dimensional topological vector spaces, compact sets have empty interior. In particular, no infinite dimensional Banach space is $\sigma$-Peano.
\end{itemize}
\end{example}

\begin{remark}\label{rem_max_dim}
{\rm If $X$ is a $\sigma$-Peano space, then $\text{card } \mathcal{C}\left(\mathbb{R},X\right) \leq \mathfrak{c}$. Indeed, this is consequence of $\text{card } X \leq \mathfrak{c}$ (as an image of the real line), \emph{in tandem} with the fact that the separability of $\mathbb{R}$ implies that each map of $\mathcal{C}\left(\mathbb{R},X\right)$ is uniquely determined the sequence of its rational images, which defines an injective map $\mathcal{C}\left(\mathbb{R},X\right) \hookrightarrow X^\mathbb{N}$ and, therefore,
$\text{card } \mathcal{C}\left(\mathbb{R},X\right)
\leq \text{card } X^\mathbb{N}
\leq \mathfrak{c}.$}
\end{remark}


Now we state and prove the main result of this section, which provides maximal
lineability of \emph{Peano curves} on arbitrary topological vector spaces that are also $\sigma$-Peano spaces.
As in \cite{BernalOrd}, we work with some \emph{particular} Peano maps, namely, with those continuous surjections
assuming each value on an unbounded set.

\vskip .1cm

It is convenient to recall a well-known fact from set theory: a family $\{A_\lambda\}_{\lambda\in\Lambda}$ of infinite subsets of $\mathbb{N}$ is called \emph{almost disjoint} if $A_\lambda \cap A_{\lambda'}$ is finite  whenever $\lambda\neq\lambda'$. The usual procedure to generate such a family is the following (see, {e.g.}, \cite{AizPeSe}): denote by $\{q_n\}_{n\in\mathbb{N}}$
an enumeration of the rational numbers. For every irrational $\alpha$, we choose a subsequence $\{q_{n_k}\}_{k\in\mathbb{N}}$ of $\{q_n\}_{n\in\mathbb{N}}$ such that $\lim_{k\to+\infty} q_{n_k} = \alpha$ and define $A_\alpha := \{n_k\}_{k\in\mathbb{N}}$. By construction, we obtain that $\{A_\alpha\}_{\alpha \in \mathbb{R}\setminus\mathbb{Q}}$ is an almost disjoint uncountable family of subsets of $\mathbb{N}$.


\begin{theorem} \label{thm_Peano_lineab}
Let $\mathcal{X}$ be a a $\sigma$-Peano topological vector space. Then $\mathcal{CS}_\infty\left(\mathbb{R}^m,\mathcal{X}\right)$ is maximal lineable in $\mathcal{C} \left(\mathbb{R}^m,\mathcal{X}\right)$.
\end{theorem}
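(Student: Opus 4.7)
The plan is to reduce to the one-dimensional case $m=1$ and then pass to arbitrary $m$ via the first-coordinate projection $\pi_1:\mathbb{R}^m\to\mathbb{R}$. Indeed, if $V\subseteq\mathcal{C}(\mathbb{R},\mathcal{X})$ is a $\mathfrak{c}$-dimensional linear subspace with $V\setminus\{0\}\subseteq\mathcal{CS}_\infty(\mathbb{R},\mathcal{X})$, then $\{f\circ\pi_1:f\in V\}$ is a $\mathfrak{c}$-dimensional linear subspace of $\mathcal{C}(\mathbb{R}^m,\mathcal{X})$, and for any nonzero $f\in V$ and $a\in\mathcal{X}$ one has $(f\circ\pi_1)^{-1}(\{a\})=f^{-1}(\{a\})\times\mathbb{R}^{m-1}$, which is unbounded. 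Maximality then follows from Remark \ref{rem_max_dim}, which bounds $\dim\mathcal{C}(\mathbb{R}^m,\mathcal{X})\leq\mathfrak{c}$.

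For the case $m=1$, I would fix (after reindexing) an increasing filtration $K_1\subseteq K_2\subseteq\cdots$ of $\mathcal{X}$ by Peano subspaces with $0\in K_1$ and $\bigcup_n K_n=\mathcal{X}$. On the pairwise disjoint intervals $I_n:=[2n,2n+1]$, I would use the Hahn-Mazurkiewicz theorem combined with the arcwise connectedness of $K_n$ (exactly as in the proof of Proposition \ref{thm_sigmaPeano}, (a)$\Rightarrow$(b)) to obtain a continuous surjection $\phi_n:I_n\to K_n$ with $\phi_n(2n)=\phi_n(2n+1)=0$. Using the almost disjoint family $\{A_\alpha\}_{\alpha\in\mathbb{R}\setminus\mathbb{Q}}$ of infinite subsets of $\mathbb{N}$ described right before the theorem, for each irrational $\alpha$ I would define $f_\alpha:\mathbb{R}\to\mathcal{X}$ by
\[
f_\alpha(t) = \begin{cases} \phi_n(t), & t\in I_n \text{ and } n\in A_\alpha,\\ 0, & \text{otherwise.}\end{cases}
\]
The endpoint condition on each $\phi_n$ ensures that $f_\alpha$ is continuous on $\mathbb{R}$. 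The candidate subspace is then $V:=\mathrm{span}\{f_\alpha:\alpha\in\mathbb{R}\setminus\mathbb{Q}\}$.

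The core verification is that every nonzero $g=\sum_{j=1}^N c_j f_{\alpha_j}$ belongs to $\mathcal{CS}_\infty(\mathbb{R},\mathcal{X})$. Almost disjointness of the $A_{\alpha_j}$ makes each $B_i:=A_{\alpha_i}\setminus\bigcup_{j\neq i}A_{\alpha_j}$ a cofinite subset of $A_{\alpha_i}$, hence infinite; for every $n\in B_i$ the restriction $g|_{I_n}$ collapses to $c_i\phi_n$. Linear independence is immediate: if $g\equiv 0$, pick any $n\in B_i$ large enough so that $\phi_n\not\equiv 0$ (which holds once $K_n\neq\{0\}$) and conclude $c_i=0$. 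For the unbounded-preimage condition, given $a\in\mathcal{X}$ take any $i$ with $c_i\neq 0$ and choose $N_a$ with $c_i^{-1}a\in K_{N_a}$, possible because $\bigcup_n K_n=\mathcal{X}$; then for every $n\in B_i$ with $n\geq N_a$ (infinitely many), $g(I_n)=c_i\phi_n(I_n)=c_i K_n\ni a$, so $a$ has a preimage in $I_n$ and $g^{-1}(\{a\})$ is unbounded.

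The genuine obstacle is to arrange, with a single construction, both linear independence \emph{and} surjective behaviour with unbounded preimages for every nontrivial combination. The almost disjoint family resolves both simultaneously: it supplies unboundedly many ``isolation'' intervals $I_n$ on which only one term of any fixed finite combination is active, which serves both to detect coefficients (for independence) and to transfer the exhaustion property $K_n\nearrow\mathcal{X}$ from a single summand $f_{\alpha_i}$ to the whole sum $g$. A minor subtlety, which I would handle at the outset, is arranging $0\in K_n$ for all $n$ (done by an index shift using $0\in\mathcal{X}=\bigcup K_n$) so that the pasting of $\phi_n$ with the zero function is continuous and the map $f\mapsto\sum c_j f_{\alpha_j}$ is truly linear.
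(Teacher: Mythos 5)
Your proposal is correct and follows essentially the same route as the paper: an almost disjoint family of infinite subsets of $\mathbb{N}$ indexing sums of Peano curves pinned at the origin on disjoint intervals, with the isolation intervals (your $I_n$, $n\in B_i$) simultaneously yielding linear independence and the unbounded-preimage property for every nontrivial combination, and maximality coming from Remark \ref{rem_max_dim}. The only immaterial difference is bookkeeping: the paper uses a doubly indexed interval family $I_{k,n}$ so that each generator $f_n$ already covers all of $\mathcal{X}$ on an unbounded set before summing over $J_\lambda$, whereas you let the exhaustion $K_n\nearrow\mathcal{X}$ occur along each infinite set $A_\alpha$.
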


\begin{proof}
It is sufficient to prove the result for $m=1$. Take $g: \mathbb{N}_0 \rightarrow \mathbb{N}\times\mathbb{N}$ a bijection, and set
\[
I_{k,n} := \left[g^{-1}(k,n),g^{-1}(k,n)+1\right],
\]
for all $k,n\in\mathbb{N}$, thus $\left\{I_{k,n}\right\}_{k,n \in \mathbb{N}}$ is a family of compact intervals of $[0,+\infty)$ such that $\bigcup_{k,n\in\mathbb{N}}I_{k,n}= [0,+\infty)$, the intervals $I_{k,n}$ having pairwise disjoint interiors, and $\bigcup_{k\in\mathbb{N}} I_{k,n}$ is unbounded for every $n$. Proceeding as in the construction presented in Proposition \ref{thm_sigmaPeano}, for each $n$, we can build a CS map $f_n: \mathbb{R}\to\mathcal{X}$ with the following properties:
\begin{itemize}
\item $f_n \left(\bigcup_{k\in\mathbb{N}}I_{k,n}\right)=\mathcal{X}$;
\item for each $k\in\mathbb{N}$, on the interval $I_{k,n}\,, f_n$ starts and ends at the origin $0_\mathcal{X} \in \mathcal{X}$ and covers the $k$-th Peano subset of $\mathcal{X}$;
\item $f_n\equiv0$ on $\bigcup_{k\in\mathbb{N}}I_{k,m}$, for all $m\neq n$.
\end{itemize}
Notice that each $f_n \in \mathcal{CS}_\infty\left(\mathbb{R},\mathcal{X}\right)$, since $\bigcup_{k\in\mathbb{N}} I_{k,n}$ is unbounded.

\vskip .1cm

Now let $ \left\{J_\lambda\right\}_{\lambda\in\Lambda}$ be an uncountable family of subsets
of $\mathbb{N}$ such that each $J_\lambda$ is infinite and the set is almost disjoint. Define, for each $\lambda \in \Lambda$,
\[
F_\lambda := \sum_{n\in J_\lambda} f_n : \mathbb{R} \to \mathcal{X}.
\]
The pairwise disjointness of the interior of the intervals $I_{k,n}$ (together with the above properties of $f_n$) guarantees that $F_\lambda$ is well-defined, as well as continuous. We assert that the set
\[
\left\{F_\lambda\right\}_{\lambda\in\Lambda}
\]
provides the desired maximal lineability. The crucial point is the following argument: let $F_{\lambda_1},\dots,F_{\lambda_N}$
be distinct and $\alpha_1,\dots,\alpha_N \in \mathbb{R}$, with $\alpha_N \neq 0$. Since $J_{\lambda_N} \setminus \left(\cup_{i=1}^{N-1} J_{\lambda_i}\right)$ is infinite, we may fix $n_0 \in J_{\lambda_N} \setminus \left(\cup_{i=1}^{N-1} J_{\lambda_i}\right)$. Notice that
\[
F_{\lambda_1} = \cdots = F_{\lambda_{N-1}} \equiv 0 \ \ \text{ and } \ \ F_{\lambda_N} = f_{n_0}
\  \, \text{ on } \ \bigcup_{k\in\mathbb{N}}I_{k,n_0}.
\]
Consequently,
\[
\sum_{k=1}^N \alpha_k \cdot F_{\lambda_k} = \alpha_N \cdot f_{n_0} \ \ \text{ on } \ \bigcup_{k\in\mathbb{N}}I_{k,n_0}.
\]
Then \,$F := \sum_{k=1}^N \alpha_k \cdot F_{\lambda_k}$
is an element of $\mathcal{CS}_\infty\left(\mathbb{R},\mathcal{X}\right)$,
because the image of $\mathbb{R}$ under $F$ contains $\alpha_N \cdot f_{n_0} (\bigcup_{k\in\mathbb{N}}I_{k,n_0}) = \alpha_N \mathcal{X} = \mathcal{X}$
and each vector of ${\mathcal X}$ is the image by $f_{n_0}$ of an unbounded set.
Hence, one may easily prove that the set $\left\{F_\lambda\right\}_{\lambda\in\Lambda}$ has $\mathfrak{c}$-many linearly independent elements, and each non-zero element of its linear span also belongs to $\mathcal{CS}_\infty\left(\mathbb{R},\mathcal{X}\right)$. The maximal lineability follows from Remark \ref{rem_max_dim}.
\end{proof}

Observe that this result recovers Theorem \ref{Alb} and the second part of Theorem \ref{BerOrd}. Moreover, together with Example \ref{exPeano}, item (b), provides,

\begin{corollary}
Let $\mathcal{N}$ be a separable normed space and $\mathcal{N}'$ be its topological dual
endowed with the weak$^\ast$-topology. Then $\mathcal{CS}_\infty\left(\mathbb{R}^m,\mathcal{N}'\right)$ is maximal lineable.
\end{corollary}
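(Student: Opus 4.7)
The plan is to deduce the corollary directly from Theorem \ref{thm_Peano_lineab} once we verify that $\mathcal{N}'$ equipped with the weak$^\ast$-topology is indeed a $\sigma$-Peano topological vector space. Since $\mathcal{N}'$ with the weak$^\ast$-topology is already a (Hausdorff) topological vector space, the only thing to check is the $\sigma$-Peano condition via Definition \ref{def_sigPeano}.

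To do this, I would set $B_n := \{ x^\ast \in \mathcal{N}' : \|x^\ast\| \leq n \}$ for each $n \in \mathbb{N}$. By the Banach--Alaoglu theorem, each $B_n$ is weak$^\ast$-compact, and the sequence $(B_n)_n$ is increasing with $\bigcup_{n\in\mathbb{N}} B_n = \mathcal{N}'$. Since $\mathcal{N}$ is separable, every weak$^\ast$-compact subset of $\mathcal{N}'$ is metrizable (as cited in Example \ref{exPeano}(b) via \cite[Theorem 3.16]{rudin}), so each $B_n$ is a compact metrizable space. Moreover, $B_n$ is convex, hence arcwise connected, in particular connected, and (being a convex subset of a topological vector space) locally connected. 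Therefore each $B_n$ is a Peano space in the sense recalled after the Hahn--Mazurkiewicz theorem.

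This shows that $\mathcal{N}'$ (with the weak$^\ast$-topology) fits Example \ref{exPeano}(b) and is a $\sigma$-Peano topological vector space. An application of Theorem \ref{thm_Peano_lineab} with $\mathcal{X} := \mathcal{N}'$ then yields that $\mathcal{CS}_\infty(\mathbb{R}^m,\mathcal{N}')$ is maximal lineable in $\mathcal{C}(\mathbb{R}^m,\mathcal{N}')$, which is the statement of the corollary.

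I do not expect any serious obstacle here, since all the heavy lifting is done by Theorem \ref{thm_Peano_lineab}. The only point that deserves a line of justification is the local connectedness (and connectedness) of the weak$^\ast$-balls $B_n$, which follows immediately from their convexity.
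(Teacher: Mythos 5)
Your proof is correct and follows essentially the same route as the paper, which obtains the corollary by combining Theorem \ref{thm_Peano_lineab} with Example \ref{exPeano}(b): the closed balls $B_n$ are weak$^\ast$-compact by Banach--Alaoglu, metrizable by separability of $\mathcal{N}$, and exhaust $\mathcal{N}'$. You even supply the connectedness and local connectedness of the $B_n$ (via convexity and local convexity of the weak$^\ast$-topology), a point the paper's Example \ref{exPeano}(b) leaves implicit.
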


Notice that this result holds in a more general framework: if $\mathcal{X}$ is a separable topological vector space and its topological dual $\mathcal{X}'$ (endowed with the weak$^\ast$-topology) is covered by an increasing sequence of (weak$^\ast$-)compact subsets, then $\mathcal{CS}_\infty\left(\mathbb{R}^m,\mathcal{X}'\right)$ is maximal lineable.

\section{Peano curves on sequence spaces} \label{sect_seq}

Throughout this section we shall deal with the space of real sequences $\mathbb{R}^\mathbb{N}$ and some of its variants. Recall that $\mathbb{R}^\mathbb{N}$ is an $\mathrm{F}$-space under the metric
\[
\mathrm{d}\left( \left(x_n\right)_n,\left(y_n\right)_n \right)
:= \sum_{n\in \mathbb{N}} \frac1{2^n} \cdot \frac{|x_n-y_n|}{1+|x_n-y_n|},
\]
and also this metric provides the product topology on $\mathbb{R}^\mathbb{N}$ (see \cite[p.175]{munkres}). From Example \ref{ex_notPeano}, it is clear that $\mathbb{R}^\mathbb{N}$ is not a $\sigma$-Peano space.


\vskip .1cm

Looking for infinite dimensional ``smaller'' subspaces of $\mathbb{R}^\mathbb{N}$ that could $\sigma$-Peano, we easily find the following example.

\begin{example}
The space $c_{00}$ of eventually null sequences {\rm (}with its natural topology induced by the sup norm{\rm )} is a $\sigma$-Peano space. Indeed, $I_n := [-n,n]^n \times \{0\}^\mathbb{N} \subset c_{00}$ defines a increasing sequence of Peano spaces in $c_{00}$, whose union results in the entire space.
\end{example}

Therefore, Theorem \ref{thm_Peano_lineab} immediately gives the following:

\begin{proposition} \label{prop_c00_lineab}
The set \,$\mathcal{CS}_\infty\left(\mathbb{R}^m,c_{00}\right)$ is maximal lineable.
\end{proposition}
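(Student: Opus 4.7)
The plan is to obtain the proposition as an immediate corollary of Theorem \ref{thm_Peano_lineab}. Everything needed has just been established, and the only task is to verify that the hypotheses of that theorem are met with $\mathcal{X} := c_{00}$ (endowed with the sup norm topology).

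First, I would note that $c_{00}$, with its sup norm topology, is a Hausdorff topological vector space. Second, the example immediately preceding the proposition exhibits the nested sequence
\[
I_n = [-n,n]^n \times \{0\}^{\mathbb{N}} \subset c_{00},
\]
where each $I_n$ is homeomorphic to the compact cube $[-n,n]^n \subset \mathbb{R}^n$ and is therefore a Peano space, and $\bigcup_{n\in\mathbb{N}} I_n = c_{00}$. By Definition \ref{def_sigPeano}, this certifies $c_{00}$ as a $\sigma$-Peano topological vector space.

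With both hypotheses in place, Theorem \ref{thm_Peano_lineab} applies and yields that $\mathcal{CS}_\infty(\mathbb{R}^m, c_{00})$ is maximal lineable in $\mathcal{C}(\mathbb{R}^m, c_{00})$. To match the notion of maximality here, one may additionally invoke Remark \ref{rem_max_dim} (which extends verbatim to the $\mathbb{R}^m$ case): the cardinality of $\mathcal{C}(\mathbb{R}^m, c_{00})$ is at most $\mathfrak{c}$, while the constant functions already contribute $\mathfrak{c}$ linearly independent elements, so the $\mathfrak{c}$-dimensional subspace produced by Theorem \ref{thm_Peano_lineab} has the maximal possible dimension. Since the entire proof is just a citation, there is no genuine obstacle — the main work was absorbed into the preceding theorem and example.
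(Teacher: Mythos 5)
Your proposal is correct and is exactly the route the paper takes: it derives the proposition as an immediate consequence of Theorem \ref{thm_Peano_lineab} once the preceding example certifies $c_{00}$ as a $\sigma$-Peano topological vector space via the cubes $[-n,n]^n \times \{0\}^{\mathbb{N}}$. (The paper additionally supplies a second, more constructive proof using the maps $\Phi_{\mathbf{r}}$, but only because that construction is reused later for algebrability; your citation-based argument matches the paper's primary proof.)
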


It possible to provide a more ``constructive'' proof of the previous result, by just making some adjustments to an argument provided in \cite{nga}. The proof is presented below and shell be used later in order to obtain algebrability results.

\begin{proof}[2nd proof of Proposition \ref{prop_c00_lineab}]
Let $\mathbb{R}^+:=(0,+\infty)$ and $\ell_\infty^+ := \left(\mathbb{R}^+\right)^\mathbb{N} \cap \ell_\infty$. For $\mathbf{r}=\left(r_n\right)_{n\in\mathbb{N}} \in \ell_\infty^+$, let us define $\Phi_\mathbf{r} : \mathbb{R}^\mathbb{N} \to \mathbb{R}^\mathbb{N}$ by
\[
\Phi_\mathbf{r} \left(\left(t_n\right)_{n\in\mathbb{N}}\right) := \left( \phi_{r_n}(t_n) \right)_{n \in \mathbb{N}},
\]
where $\phi_{r}(t):=e^{rt}-e^{-rt}$ for each $r \in \mathbb{R}^+$.
Observe that each $\phi_{r}$ is a homeomorphism from $\mathbb{R}$ to $\mathbb{R}$ and, consequently, $\Phi_\mathbf{r}$ is a bijection. Notice that the restriction $\Phi_\mathbf{r} : \ell_\infty \to \ell_\infty$ is well defined and surjective because, for $\left(t_n\right)_n \in \ell_\infty$, one has
\[
\left\vert \phi_{r_n}(t_n) \right\vert
= \left\vert e^{r_n t_n}-e^{-r_n t_n} \right\vert
\leq e^{r_n |t_n|}
\leq e^C \,\,\, (n=1, \dots ),
\]
for some positive constant $C$. Moreover, the family $\{\phi_{r_n}\}_n$ is equicontinuous. In fact, let $t,s \in \mathbb{R}, t<s$. There exists $\zeta=\zeta(t,s,n) \in [t,s]$ such that
\[
\left\vert \phi_{r_n}(t)-\phi_{r_n}(s) \right\vert 
= \left\vert \phi_{r_n}'(\zeta) \right\vert \cdot \left\vert t-s \right\vert.
\]
But since $\mathbf{r} \in \ell_\infty^+$,
\[
|\phi_{r_n}'(u)|
= r_n\left(e^{r_n u}-e^{-r_n u}\right)
\leq r_n e^{r_n |u|}
\leq \|\mathbf{r}\|_\infty  e^{\|\mathbf{r}\|_\infty  |u|},
\]
for all real numbers $u$. Thus,
\[
\left\vert \phi_{r_n}(t)-\phi_{r_n}(s) \right\vert 
\leq \|\mathbf{r}\|_\infty e^{\|\mathbf{r}\|_\infty|t-s|} \cdot \left\vert t-s \right\vert,
\]
and, therefore, $\{\phi_{r_n}\}_n$ is equicontinuous. The continuity of the restriction
\[
\Phi_\mathbf{r} : \ell_\infty \to \ell_\infty
\]
is an immediate consequence of the equicontinuity of the coordinate maps. Since $\phi_r(0)=0$ for all $r\in\mathbb{R}^+$, we may restrict again to $\Phi_\mathbf{r} : c_{00} \to c_{00}$. Then, for a fixed map $F \in \mathcal{CS}_\infty \left(\mathbb{R},c_{00}\right)$ (see the comments after the proof of Proposition \ref{thm_sigmaPeano}), the set
\[
\left\{\Phi_\mathbf{r} \circ F\right\}_{\mathbf{r} \in \ell_\infty^+}
\]
only contains functions in $\mathcal{CS}_\infty\left(\mathbb{R},c_{00}\right)$. Working on each coordinate and using the properties of the maps $\phi_r$ as in \cite{nga}, this family provides the desired maximal lineability.
\end{proof}

The following result extends Theorem \ref{thm_RmCn} to the framework of sequence spaces.

\begin{proposition}
The set \,$\mathcal{CS}_\infty\left(\mathbb{R}^m,c_{00}\left(\mathbb{C}\right)\right)$ is maximal strongly algebrable
in \,$\mathcal{C} \left(\mathbb{R}^m,c_{00}\left(\mathbb{C}\right)\right)$.
\end{proposition}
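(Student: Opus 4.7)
The plan is to transfer the free algebra of entire functions used in Theorem~\ref{thm_RmCn} to the $c_{00}(\mathbb{C})$-valued setting by means of coordinatewise composition with a fixed Peano map. As in Theorem~\ref{thm_RmCn}, I first reduce to $m=1$: given any CS map $G:\mathbb{R}\to c_{00}(\mathbb{C})$ whose fibres are all unbounded, the composition $G\circ\pi$ with a coordinate projection $\pi:\mathbb{R}^m\to\mathbb{R}$ inherits both properties, and this pullback is an algebra monomorphism, so the $m=1$ case implies the general one.

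For each $s\in A:=(0,+\infty)\setminus\mathbb{N}$, choose the specific entire function $\varphi_s(z)=\sum_{n\geq 1}z^n/n^{n/s}$ from Remark~\ref{rem_order}(b); it has order $s$ and satisfies $\varphi_s(0)=0$. By Lemma~\ref{lemagordo} the family $\{\varphi_s\}_{s\in A}$ freely generates a subalgebra $\mathcal{A}\subset\mathcal{H}(\mathbb{C})$ in which every nonzero element $\varphi$ is a non-constant polynomial without constant term in finitely many $\varphi_s$, hence a non-constant entire function whose order is some $s_j\notin\mathbb{N}$; by Remark~\ref{rem_order}(e) such a $\varphi$ is surjective on $\mathbb{C}$, and trivially $\varphi(0)=0$. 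Fix now $F=(F_n)_n\in\mathcal{CS}_\infty(\mathbb{R},c_{00}(\mathbb{C}))$, which exists by Proposition~\ref{thm_sigmaPeano} since $c_{00}(\mathbb{C})$ is $\sigma$-Peano, and define, for every $\varphi\in\mathcal{A}$, the map $\Psi_\varphi:\mathbb{R}\to c_{00}(\mathbb{C})$ by $\Psi_\varphi(t):=(\varphi(F_n(t)))_{n\in\mathbb{N}}$. Because $F(t)\in c_{00}(\mathbb{C})$ and $\varphi(0)=0$, the sequence $\Psi_\varphi(t)$ is eventually zero, so $\Psi_\varphi$ is well defined. I claim that $\mathcal{B}:=\{\Psi_\varphi:\varphi\in\mathcal{A}\}$ is the desired freely $\mathfrak{c}$-generated algebra contained in $\mathcal{CS}_\infty(\mathbb{R},c_{00}(\mathbb{C}))\cup\{0\}$.

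The remaining verifications are the following. Continuity of $\Psi_\varphi$ at any $t_0$ follows from continuity of $F$ in sup-norm, which forces the set $\{F_n(t):n\in\mathbb{N},\,|t-t_0|<\delta\}$ to be contained in a bounded disk on which $\varphi$ is uniformly continuous. Since $\mathcal{C}(\mathbb{R},c_{00}(\mathbb{C}))$ carries the pointwise (coordinatewise) product, $\varphi\mapsto\Psi_\varphi$ is an algebra homomorphism. Freeness: if $P(\Psi_{\varphi_{s_1}},\ldots,\Psi_{\varphi_{s_N}})\equiv 0$, then evaluating the first coordinate yields $P(\varphi_{s_1}(F_1(t)),\ldots,\varphi_{s_N}(F_1(t)))=0$ for all $t\in\mathbb{R}$; because $F$ surjects onto $c_{00}(\mathbb{C})$ the coordinate $F_1:\mathbb{R}\to\mathbb{C}$ is surjective, so $P(\varphi_{s_1},\ldots,\varphi_{s_N})\equiv 0$ on $\mathbb{C}$, and Lemma~\ref{lemagordo} forces $P=0$. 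For the $\mathcal{CS}_\infty$-property: given $\varphi\in\mathcal{A}\setminus\{0\}$ (surjective on $\mathbb{C}$) and a target $\mathbf{a}=(a_1,\ldots,a_K,0,0,\ldots)\in c_{00}(\mathbb{C})$, pick $b_n\in\varphi^{-1}(\{a_n\})$ for $n\leq K$ and $b_n=0$ for $n>K$; then the unbounded set $F^{-1}(\{(b_n)_n\})$ is contained in $\Psi_\varphi^{-1}(\{\mathbf{a}\})$, so the latter is unbounded. Finally, maximality of the cardinality $\mathfrak{c}$ follows from Remark~\ref{rem_max_dim}, since $c_{00}(\mathbb{C})$ is $\sigma$-Peano.

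The step I expect to be the main obstacle is the clean verification of continuity of $\Psi_\varphi$: because the ``coordinate extension'' $\Phi:(z_n)_n\mapsto(\varphi(z_n))_n$ is generally \emph{not} continuous on $c_{00}(\mathbb{C})$ in sup-norm ($\varphi$ need not be uniformly continuous on $\mathbb{C}$), one cannot simply factor $\Psi_\varphi=\Phi\circ F$ and invoke continuity of $F$; the local uniform-boundedness argument sketched above is essential, and is what makes the pointwise product on $\mathcal{C}(\mathbb{R},c_{00}(\mathbb{C}))$ behave well enough to support the strong algebrability argument.
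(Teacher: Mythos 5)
Your proof is correct and follows essentially the same route as the paper's: fix one $F\in\mathcal{CS}_\infty\left(\mathbb{R},c_{00}\left(\mathbb{C}\right)\right)$, compose it with coordinatewise applications of entire functions of non-integer order, and invoke Lemma~\ref{lemagordo} together with Remark~\ref{rem_order}(e) to get surjectivity of every nonzero polynomial combination. The only real difference is in your favour: the paper indexes its generators by sequences $\mathbf{r}\in A^{\mathbb{N}}$ (a different $\varphi_{r_n}$ in each coordinate), whereas you replicate a single $\varphi_s$ across all coordinates, which lets both the freeness and the $\mathcal{CS}_\infty$ checks collapse to a single coordinate via the surjectivity of $F_1$, and sidesteps any worry about distinct sequences in $A^{\mathbb{N}}$ sharing entries.
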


\begin{proof}
It is sufficient to deal with the case $m=1$. The argument combines the previous constructive proof and the ideas of Theorem \ref{thm_RmCn}: let $A := (0,+\infty ) \setminus \mathbb{N}$,
and let $\varphi_s : \mathbb{C} \to \mathbb{C}$ stand for an entire function of order $s>0$ such that $\varphi_s(0)=0$.
For each $\mathbf{r}=(r_n)_n \in A^\mathbb{N}$, the map
\[
\varPhi_\mathbf{r} := \left(\varphi_{r_n}\right)_{n \in \mathbb{N}}: c_{00}\left(\mathbb{C}\right) \to c_{00}\left(\mathbb{C}\right)
\]
is well-defined, continuous and surjective. Therefore, for a fixed map
$$F \in \mathcal{CS}_\infty\left(\mathbb{R},c_{00}\left(\mathbb{C}\right)\right),$$
the set $\left\{\varPhi_\mathbf{r} \circ F\right\}_{\mathbf{r} \in A^\mathbb{N}}$
generates a free algebra, which provides the strong maximal algebrability.
\end{proof}

From Example \ref{ex_notPeano}, item (a), we know that $\ell_\infty$ is not $\sigma$-Peano. On the other hand, if we consider the product topology inherited from $\mathbb{R}^\mathbb{N}$, it is obvious that it becomes $\sigma$-Peano. In fact, $\ell_\infty = \bigcup_n [-n,n]^\mathbb{N}$. Consequently, Theorem \ref{thm_Peano_lineab} provides the maximal lineability of the set $\mathcal{CS}_\infty\left(\mathbb{R}^m,\ell_\infty\right)$ in $\mathcal{C} \left(\mathbb{R}^m,\ell_\infty\right)$.

Notice that, as we did earlier when we dealt with $c_{00}$, one could also present a constructive proof of this lineability result: for a fixed $F \in \mathcal{CS}_\infty\left(\mathbb{R},\ell_\infty\right)$ the set
\[
\left\{ \Phi_\mathbf{r}|_{\ell_\infty} \circ F \right\}_{\mathbf{r} \in \ell_\infty^+}
\]
provides the desired maximal lineability. With appropriate adaptations, a similar argument as the one employed in the proof of the algebrability of $\mathcal{CS}\left(\mathbb{R}^m,c_{00}\left(\mathbb{C}\right)\right)$ will prove that the set \,$\mathcal{CS}\left(\mathbb{R}^m,\ell_\infty\left(\mathbb{C}\right)\right)$
is maximal strongly algebrable in $\mathcal{C}\left(\mathbb{R}^m,\ell_\infty\left(\mathbb{C}\right)\right)$,
when $\ell_\infty$ is endowed with the product topology inherited from \,$\mathbb{R}^\mathbb{N}$.

\section{Peano curves on function spaces}

Let $\Lambda$ be an infinite index set. Recall that the space $\mathbb{R}^\Lambda$ of real functions $f:\Lambda\to\mathbb R$ is a  complete metric space when endowed with the metric given by
\[
\mathrm{d}\left( f,g \right)
:= \sup_{\lambda\in\Lambda} \min \left\{1, \vert f(\lambda)-g(\lambda)\vert\right\},
\]
which provides the \emph{uniform} topology on $\mathbb{R}^\Lambda$, strictly finer that the product topology (see \cite[p.~124]{munkres} for more details). Note that $\mathbb{R}^\Lambda$ is not $\sigma$-compact and, thus, cannot be $\sigma$-Peano. Indeed, suppose that $\mathbb{R}^\Lambda = \bigcup_{n \in \mathbb{N}} K_n$. We may regard $\mathbb{N}$ as a subset of $\Lambda$, and consider the standard $n$-projection $\pi_n : \mathbb{R}^\Lambda \to \mathbb{R}$, which is continuous and, so there is $x_n \in \mathbb{R} \setminus \pi_n(K_n)$. However, the function $f:\Lambda\to\mathbb R$ defined by $f(n)=x_n$, for $n \in \mathbb{N}$, and $f(\lambda)=0$, if $\lambda \notin \mathbb{N}$, does not belong to $\bigcup_n K_n = \mathbb{R}^\Lambda$.

\vskip .1cm

Let $\Lambda, \Gamma$ be infinite index sets. Clearly, if $\text{card}\,\Lambda \geq \text{card}\,\Gamma$, then $\mathcal{S} (\mathbb{R}^\Lambda , \mathbb{R}^\Gamma) \neq \varnothing$, {i.e.}, the set of surjective maps from from $\mathbb{R}^\Lambda$ onto $\mathbb{R}^\Gamma$ is non-empty. In this situation, $\Gamma$ may be seen as a subset of $\Lambda$. Keeping the notation of the proof of Proposition \ref{prop_c00_lineab}, for each $\mathbf{r}=\left(r_\gamma\right)_{\gamma \in \Gamma} \in (0,1]^\Gamma$, define $\Phi_\mathbf{r} : \mathbb{R}^\Lambda \to \mathbb{R}^\Gamma$ by $\Phi_\mathbf{r}(f)(\gamma) := \phi_{r_\gamma} \left(f(\gamma)\right)$. Since the set of coordinate maps $\left\{ \phi_\gamma := \pi_\gamma \circ \Phi_\mathbf{r} \right\}_{\gamma \in \Gamma}$ is equicontinuous, $\Phi_\mathbf{r}$  is continuous. Working with the set $\left\{ \Phi_\mathbf{r} : \mathbb{R}^\Lambda \to \mathbb{R}^\Gamma  \right\}_{\mathbf{r} \in (0,1]^\Gamma}$ and with entire maps as in Section 2, we obtain
\begin{proposition}
Let $\text{card} \,\Lambda \geq \text{card}\,\Gamma$. Then
\begin{itemize}
\item[\rm (a)] $\mathcal{CS}\left(\mathbb{R}^\Lambda,\mathbb{R}^\Gamma\right)$ \,is \,$2^{\text{card}\,\Gamma}$-lineable.

\item[\rm (b)] $\mathcal{CS}\left(\mathbb{C}^\Lambda,\mathbb{C}^\Gamma\right)$ is \,$2^{\text{card}\,\Gamma}$-algebrable.
\end{itemize}
\end{proposition}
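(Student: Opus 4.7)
The plan is to produce, for (a), a $2^{\mathrm{card}\,\Gamma}$-dimensional subspace of $\mathcal{CS}(\mathbb{R}^\Lambda,\mathbb{R}^\Gamma)\cup\{0\}$ inside the linear span of the already-defined family $\{\Phi_\mathbf{r}\}_{\mathbf{r}\in(0,1]^\Gamma}$, whose underlying index set has cardinality $\mathfrak{c}^{\mathrm{card}\,\Gamma}=2^{\mathrm{card}\,\Gamma}$. For (b), I would replace each $\phi_r$ by an entire function $\varphi_s$ of order $s$, normalized by $\varphi_s(0)=0$, and form the analogous $\Phi_\mathbf{s}(f)(\gamma):=\varphi_{s_\gamma}(f(\gamma))$ for $\mathbf{s}\in A^\Gamma$ (where $A=(0,+\infty)\setminus\mathbb{N}$), so as to generate a $2^{\mathrm{card}\,\Gamma}$-generated subalgebra of $\mathcal{CS}(\mathbb{C}^\Lambda,\mathbb{C}^\Gamma)\cup\{0\}$.

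For (a), the main ingredients are two elementary facts about $\phi_r(t)=e^{rt}-e^{-rt}$: the family $\{\phi_r\}_{r>0}$ is linearly independent in $C(\mathbb{R},\mathbb{R})$ because exponentials with distinct exponents are linearly independent, and every non-trivial real linear combination of finitely many $\phi_r$'s with distinct parameters is a continuous surjection of $\mathbb{R}$, since the largest-$r$ term dominates as $t\to\pm\infty$ and oddness of $\phi_r$ combined with the intermediate value theorem forces the image to be all of $\mathbb{R}$. The next step is to select an index subfamily $\{\mathbf{r}^{(\alpha)}\}_{\alpha\in I}\subset(0,1]^\Gamma$ with $|I|=2^{\mathrm{card}\,\Gamma}$ whose entries are coordinate-wise pairwise distinct, i.e.\ $r^{(\alpha)}_\gamma\neq r^{(\beta)}_\gamma$ whenever $\alpha\neq\beta$ and for every $\gamma$. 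Then for any nonzero $(a_1,\dots,a_N)\in\mathbb{R}^N$ and any $\gamma\in\Gamma$, the $\gamma$-component of $\sum_i a_i\Phi_{\mathbf{r}^{(\alpha_i)}}$ is a non-trivial combination of distinct $\phi_r$'s, hence surjective in that coordinate; as the coordinate values $f(\gamma)$ are free, the sum lies in $\mathcal{CS}(\mathbb{R}^\Lambda,\mathbb{R}^\Gamma)$. For (b), the analog invokes Lemma~\ref{lemagordo}: any non-constant polynomial $P$ without constant term, evaluated on $\varphi_{s_1},\dots,\varphi_{s_N}$ with pairwise distinct $s_i\in A$, produces an entire function of order $\max_{i\in\mathcal{I}_P}s_i\in A$, hence surjective on $\mathbb{C}$ by the Hadamard corollary (Remark~\ref{rem_order}(e)). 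Applied coordinate-wise to an analogous subfamily, this yields a free $|I|$-generated subalgebra of $\mathcal{CS}(\mathbb{C}^\Lambda,\mathbb{C}^\Gamma)\cup\{0\}$, following the template of Theorem~\ref{thm_RmCn}.

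The principal obstacle is the construction of an index subfamily of cardinality $2^{\mathrm{card}\,\Gamma}$ whose entries are pairwise distinct in every $\gamma$-coordinate, because each coordinate ranges only over a set of cardinality $\mathfrak{c}$, so a naive pigeonhole caps such families at size $\mathfrak{c}$. The plan for overcoming this is to relax ``coordinate-wise pairwise distinct'' to a weaker combinatorial condition on the induced partitions of finite subcollections---sufficient to guarantee that, for every nonzero $(a_i)$, some coordinate has a non-vanishing group-sum $\sum_{i:r^{(\alpha_i)}_\gamma=s}a_i$ for at least one $s$---and to build the subfamily by transfinite induction along a well-ordering of $(0,1]^\Gamma$, at each stage avoiding the finitely many hyperplanes created by previously chosen elements. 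Verifying that this weaker condition still forces the coordinate-wise surjectivity of every non-trivial linear (respectively, polynomial) combination is the technical heart of the proof.
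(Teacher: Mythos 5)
Your setup (the coordinate-wise maps $\Phi_\mathbf{r}$, the linear independence of the $\phi_r$, the dominant-term/intermediate-value argument showing that a nontrivial combination $\sum_s c_s\phi_s$ is onto $\mathbb{R}$, and the reduction of part (b) to Lemma~\ref{lemagordo} and Remark~\ref{rem_order}(e)) is exactly the paper's approach, and you have correctly spotted the pigeonhole obstruction that the paper's one-line proof (``working with the set $\{\Phi_\mathbf{r}\}_{\mathbf{r}\in(0,1]^\Gamma}$\dots'') passes over in silence. The problem is that your proposed repair cannot work. Since $\bigl(\sum_{i}a_i\Phi_{\mathbf{r}^{(\alpha_i)}}\bigr)(f)(\gamma)$ depends only on $f(\gamma)$, surjectivity onto $\mathbb{R}^\Gamma$ forces the scalar map $t\mapsto\sum_i a_i\phi_{r^{(\alpha_i)}_\gamma}(t)$ to be onto $\mathbb{R}$ for \emph{every} $\gamma\in\Gamma$, not just for some $\gamma$: a single coordinate at which all group-sums vanish already confines the image to the proper subset $\{g\in\mathbb{R}^\Gamma: g(\gamma)=0\}$. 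Your weaker condition (``some coordinate has a non-vanishing group-sum'') yields linear independence of the family, but not that its span minus the origin lies in $\mathcal{CS}(\mathbb{R}^\Lambda,\mathbb{R}^\Gamma)$.

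Worse, the condition that is actually needed collapses back to the one you started from: applied to the coefficient vector $(1,-1)$ it says that $\Phi_{\mathbf{r}^{(\alpha)}}-\Phi_{\mathbf{r}^{(\beta)}}$ is surjective only if $r^{(\alpha)}_\gamma\neq r^{(\beta)}_\gamma$ for every $\gamma$, so the map $\alpha\mapsto r^{(\alpha)}_\gamma$ must be injective for each fixed $\gamma$ and the index family injects into $(0,1]$, hence has cardinality at most $\mathfrak{c}$. No transfinite ``hyperplane-avoiding'' selection can exceed this bound, and the same degeneration occurs for the algebra in (b) (consider $\varPhi_{\mathbf{s}}-\varPhi_{\mathbf{s}'}$ when $\mathbf{s}$ and $\mathbf{s}'$ agree at some coordinate). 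Thus, within the class of maps acting coordinate-by-coordinate, the construction proves only $\mathfrak{c}$-lineability and $\mathfrak{c}$-algebrability; this equals the claimed $2^{\mathrm{card}\,\Gamma}$ precisely when $\Gamma$ is countable, in which case the constant tuples $\mathbf{r}=(r,r,\dots)$ already realize your unrelaxed ``everywhere distinct'' condition and no relaxation is needed. To reach $2^{\mathrm{card}\,\Gamma}$ for uncountable $\Gamma$ one would need maps whose $\gamma$-th output depends on more than the single entry $f(\gamma)$ --- a genuinely different idea, and one that the paper's own (sketched) proof does not supply either.
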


\begin{remark}\label{gam}
{\rm There is nothing to be done in the remaining case, which is the most \emph{dramatic scenario}, namely, the case $\text{card}\,\Lambda<\text{card}\,\Gamma$. Here we have $\text{card}\,\left(\mathbb{R}^\Lambda\right) = 2^{\text{card}\,\Lambda} \leq 2^{\text{card}\,\Gamma} = \text{card}\,\left(\mathbb{R}^\Gamma\right)$. In \textbf{ZF+GCH} (Zermelo-Fraenkel + generalized continuum hypothesis) we have that $\text{card}\,\Lambda<\text{card}\,\Gamma$ implies $2^{\text{card}\,\Lambda} < 2^{\text{card}\,\Gamma}$ (although in $\textbf{ZF+MA} +\neg\textbf{CH}$ --where {\bf MA} stands for Martin Axiom-- what we have is that for all $\aleph_0\leq \mathfrak{a}, \mathfrak{b} <\mathfrak{c},\ 2^\mathfrak{a} = 2^\mathfrak{b}$). Thus, in \textbf{ZF+GCH} we get
$\mathcal{S}\left(\mathbb{R}^\Lambda,\mathbb{R}^\Gamma\right) = \varnothing,$
provided $\text{card}\,\Lambda<\text{card}\,\Gamma$, that is, there is no surjective map from $\mathbb{R}^\Lambda$ onto $\mathbb{R}^\Gamma$. Therefore, $\mathcal{CS} (\mathbb{R}^\Lambda , \mathbb{R}^\Gamma ) = \varnothing$ in this case.}
\end{remark}

\noindent \textbf{Acknowledgements.} The authors would like to thank Prof.~Dr.~J.L.~G\'amez-Merino for his fruitful comments regarding Remark \ref{gam}.


\begin{bibdiv}
\begin{biblist}

\bib{ahlfors}{book}{
  author={Ahlfors, L.V.},
  title={Complex analysis},
  publisher={McGraw-Hill, Inc.},
  date={1979},
  pages={xiv+331}
}

\bib{AizPeSe}{article}{
   author={Aizpuru, A.},
   author={P\'erez-Eslava, C.},
   author={Seoane-Sep\'ulveda, J.B.},
   title={Linear structure of sets of divergent sequences and series},
   journal={Linear Algebra Appl.},
   volume={418},
   date={2006},
   number={2-3},
   pages={595--598},
   doi={10.1016/j.laa.2006.02.041},
}

\bib{nga}{article}{
  author={Albuquerque, N.G.},
  title={Maximal lineability of the set of continuous surjections},
  journal={Bull. Belg. Math. Soc. Simon Stevin},
  volume={21},
  date={2014},
  pages={83--87}
}

\bib{AronGS}{article}{
  author={Aron, R.},
  author={Gurariy, V.I.},
  author={Seoane-Sep\'ulveda, J.B.},
  title={Lineability and spaceability of sets of functions on $\mathbb{R}$},
  journal={Proc. Amer. Math. Soc.},
  volume={133},
  date={2005},
  pages={795--803},
}
		
\bib{APS_Studia}{article}{
   author={Aron, Richard M.},
   author={P{\'e}rez-Garc{\'{\i}}a, D.},
   author={Seoane-Sep{\'u}lveda, J.B.},
   title={Algebrability of the set of non-convergent Fourier series},
   journal={Studia Math.},
   volume={175},
   date={2006},
   number={1},
   pages={83--90},
   doi={10.4064/sm175-1-5},
}

\bib{AronSe}{article}{
   author={Aron, R.M.},
   author={Seoane-Sep{\'u}lveda, J.B.},
   title={Algebrability of the set of everywhere surjective functions on
   $\Bbb C$},
   journal={Bull. Belg. Math. Soc. Simon Stevin},
   volume={14},
   date={2007},
   number={1},
   pages={25--31},
}

\bib{bartoszewiczglab2012b}{article}{
  author={Bartoszewicz, A.},
  author={G\l \c ab, S.},
  title={Strong algebrability of sets of sequences of functions},
  journal={Proc. Amer. Math. Soc.},
  volume={141},
  date={2013},
  pages={827--835},
}

\bib{BarGP}{article}{
  author={Bartoszewicz, A.},
  author={G\l \c ab, S.},
  author={Paszkiewicz, A.},
  title={Large free linear algebras of real and complex functions},
  journal={Linear Algebra Appl.},
  volume={438},
  date={2013},
  pages={3689--3701},
}

\bib{isr}{article}{
  author={Bastin, F.},
  author={Conejero, J.A.},
  author={Esser, C.},
  author={Seoane-Sep\'{u}lveda, J.B.},
  title={Algebrability and nowhere Gevrey differentiability},
  journal={Israel J. Math.},
  status={accepted for publication, 2014},
}

\bib{bernal2010}{article}{
  author={Bernal-Gonz{\'a}lez, L.},
  title={Algebraic genericity of strict-order integrability},
  journal={Studia Math.},
  volume={199},
  date={2010},
  number={3},
  pages={279--293},
}

\bib{BernalOrd}{article}{
   author={Bernal-Gonz{\'a}lez, L.},
   author={Ord{\'o}{\~n}ez Cabrera, M.},
   title={Lineability criteria, with applications},
   journal={J. Funct. Anal.},
   volume={266},
   date={2014},
   number={6},
   pages={3997--4025},
   doi={10.1016/j.jfa.2013.11.014},
}

\bib{BerPS}{article}{
  author={Bernal-Gonz{\'a}lez, L.},
   author={Pellegrino, D.},
   author={Seoane-Sep{\'u}lveda, J.B.},
   title={Linear subsets of nonlinear sets in topological vector spaces},
   journal={Bull. Amer. Math. Soc. (N.S.)},
   volume={51},
   date={2014},
   number={1},
   pages={71--130},
   doi={10.1090/S0273-0979-2013-01421-6},}

\bib{boas}{book}{
  author={Boas Jr., R.P.},
  title={Entire functions},
  publisher={Academic Press, Inc.},
  date={1954},
  pages={276}
}

\bib{JFA2014}{article}{
   author={Cariello, D.},
   author={Seoane-Sep{\'u}lveda, J.B.},
   title={Basic sequences and spaceability in $\ell_p$ spaces},
   journal={J. Funct. Anal.},
   volume={266},
   date={2014},
   number={6},
   pages={3797--3814},
   doi={10.1016/j.jfa.2013.12.011},
}
		
\bib{LAA2014}{article}{
   author={Ciesielski, K.C.},
   author={G{\'a}mez-Merino, J.L.},
   author={Pellegrino, D.},
   author={Seoane-Sep{\'u}lveda, J.B.},
   title={Lineability, spaceability, and additivity cardinals for
   Darboux-like functions},
   journal={Linear Algebra Appl.},
   volume={440},
   date={2014},
   pages={307--317},
   doi={10.1016/j.laa.2013.10.033},
}

\bib{EGS}{article}{
   author={Enflo, P.H.},
   author={Gurariy, V.I.},
   author={Seoane-Sep{\'u}lveda, J.B.},
   title={Some results and open questions on spaceability in function spaces},
   journal={Trans. Amer. Math. Soc.},
   volume={366},
   date={2014},
   number={2},
   pages={611--625},
   doi={10.1090/S0002-9947-2013-05747-9},
}

\bib{Gamez}{article}{
  author={G\'amez-Merino, J.L.},
  title={Large algebraic structures inside the set of surjective functions},
  journal={Bull. Belg. Math. Soc.},
  volume={18},
  date={2011},
  pages={297--300},
}

\bib{GamezMS}{article}{
  author={G\'amez-Merino, J.L.},
  author={Mu\~noz-Fern\'andez, G.A.},
  author={Seoane-Sep\'ulveda, J.B.},
  title={Lineability and additivity in $\mathbb{R}^{\mathbb{R}}$},
  journal={J. Math. Anal. Appl.},
  volume={369},
  date={2010},
  pages={265--272}, 
}

\bib{PAMS2010}{article}{
   author={G{\'a}mez-Merino, J.L.},
   author={Mu{\~n}oz-Fern{\'a}ndez, G.A.},
   author={S{\'a}nchez, V.M.},
   author={Seoane-Sep{\'u}lveda, J.B.},
   title={Sierpi\'nski-Zygmund functions and other problems on lineability},
   journal={Proc. Amer. Math. Soc.},
   volume={138},
   date={2010},
   number={11},
   pages={3863--3876},
   doi={10.1090/S0002-9939-2010-10420-3},
}

\bib{greene.krantz}{book}{
  author={Greene, R.E.},
  author={Krantz, S.G.},
  title={Function theory of one complex variable},
  publisher={American Mathematical Society},
  place={Providence, Rhode Island},
  date={2006},
  pages={xx+504},
}

\bib{HoY}{book}{
  author={Hocking, J.G.},
  author={Young, G.S.},
  title={Topology},
  publisher={Dover},
  place={New York},
  date={1988},
}

\bib{Khar}{book}{
  author={Kharazishvili, A.B.},
  title={Strange functions in real analysis},
  publisher={Pure and Applied Mathematics (Boca Raton), vol.~272, Chapman and Hall/CRC},
  place={Boca Raton, FL},
  date={2006},
}

\bib{munkres}{book}{
  author={Munkres, J.R.},
  title={Topology: a first course},
  publisher={2nd ed., Prentice-Hall Inc.},
  place={Upper Saddle River, N.J.},
  date={2000},
}

\bib{jones1942}{article}{
  author={Jones, F.B.},
  title={Connected and disconnected plane sets and the functional equation $f(x)+f(y)=f(x+y)$},
  journal={Bull. Amer. Math. Soc.},
  volume={48},
  date={1942},
  pages={115--120},
}

\bib{peano}{article}{
   author={Peano, G.},
   title={Sur une courbe, qui remplit toute une aire plane},
   language={French},
   journal={Math. Ann.},
   volume={36},
   date={1890},
   number={1},
   pages={157--160},
   doi={10.1007/BF01199438},
}

\bib{rudin}{book}{
  author={Rudin, W.},
  title={Functional Analysis},
  publisher={McGraw-Hill, Inc.},
  edition={2nd ed.}
  place={New York},
  date={1991},
  pages={xv+424}
}

\bib{sagan}{book}{
   author={Sagan, H.},
   title={Space-filling curves},
   series={Universitext},
   publisher={Springer-Verlag, New York},
   date={1994},
   pages={xvi+193},
   doi={10.1007/978-1-4612-0871-6},
}

\bib{seoane2006}{book}{
  author={Seoane-Sep\'{u}lveda, J.B.},
  title={Chaos and lineability of pathological phenomena in analysis},
  note={Thesis (Ph.D.)--Kent State University},
  publisher={ProQuest LLC, Ann Arbor, MI},
  date={2006},
  pages={139},
  isbn={978-0542-78798-0},
}

\bib{willard}{book}{
  author={Willard, S.},
  title={General Topology},
  publisher={Dover},
  place={New York},
  date={2004},
}

\end{biblist}
\end{bibdiv}

\end{document}